\documentclass[a4paper,11pt]{article}
\usepackage{amsmath,amsfonts,amssymb,amsthm}
\usepackage{graphics}
\usepackage{epsfig}
\usepackage{fullpage}
\usepackage{esint} 

\newtheorem{theorem}{Theorem}
\newtheorem{remark}{Remark}
\newtheorem{lemma}{Lemma}
\newtheorem{corollary}{Corollary}

\providecommand{\B}{\mathcal{B}}
\providecommand{\uoset}[3]{\underset{\phantom{#1}}{\overset{#2}{#3}}}
\newcommand{\en}[1]{\left< #1 \right>}
\newcommand{\p}[1]{\left(#1\right)}
\renewcommand{\a}[1]{\left| #1 \right|}
\providecommand{\Ah}{A_{\textrm{hom}}}
\newcommand{\R}{\mathbb{R}}

\newcommand{\ignore}[1]{}
\newcommand{\oh}{{\textstyle\frac{1}{2}}}
\providecommand{\ud}[1]{\, \mathrm{d} #1}
\providecommand{\dx}{\ud{x}}
\providecommand{\dy}{\ud{y}}
\providecommand{\dbx}{\ud{\bar x}}
\providecommand{\dby}{\ud{\bar y}}
\providecommand{\F}{F_\rho}

\author{Peter Bella \and Arianna Giunti}
\title{Green's function for elliptic systems: moment bounds}

\begin{document}

\date{}
\maketitle

\centerline{Max Planck Institute for Mathematics in the Sciences}
\centerline{Inselstrasse 22, 04103 Leipzig, Germany}

\begin{abstract}
 We study estimates of the Green's function in $\R^d$ with $d \ge 2$, for the linear second order elliptic equation in divergence form with variable uniformly elliptic coefficients. In the case $d \ge 3$, we obtain estimates on the Green's function, its gradient, and the second mixed derivatives which scale optimally in space, in terms of the ``minimal radius'' $r_*$ introduced in [Gloria, Neukamm, and Otto: A regularity theory for random elliptic operators; ArXiv e-prints (2014)]. As an application, our result implies optimal stochastic Gaussian bounds in the realm of homogenization of equations with random coefficient fields with finite range of dependence. In two dimensions, where in general the Green's function does not exist, we construct its gradient and show the corresponding estimates on the gradient and mixed second derivatives. Since we do not use any scalar methods in the argument, the result holds in the case of uniformly elliptic systems as well.
\end{abstract}

\section{Introduction}

This paper is a contribution to recently very active area of quantitative stochastic homogenization of second order uniformly elliptic operators, the main goal of which is to quantify how close is the large scale behavior of the heterogeneous operator $(-\nabla \cdot A(x) \nabla)^{-1}$ to the behavior of the constant-coefficient solution operator $(-\nabla \cdot \Ah \nabla)^{-1}$. Here $A(x)$ stands for the non-constant (random) coefficient field defined on $\R^d$ and $\Ah$ is called the matrix of homogenized coefficients. 

As originally realized in their seminal papers by Papanicolaou and Varadhan~\cite{papvar} and, independently, by Kozlov~\cite{kozlov}, the central object in the homogenization of elliptic operators with random coefficients is the \emph{corrector} $\phi_\xi$, defined for each direction $\xi \in \R^d$ as a solution of the following elliptic problem
\begin{equation}\nonumber
 -\nabla_x \cdot (A(x) \nabla_x (x \cdot \xi + \phi_\xi(A,x))) = 0
\end{equation}
in the whole space $\R^d$. The function $\phi$ is called corrector since it corrects the linear function $x \cdot \xi$, which is clearly solution to the constant-coefficient equation, to be a solution of the equation with heterogeneous coefficients. Since $\phi$ serves as a correction of a linear function, it should naturally be smaller, i.e., sublinear. Assuming the distribution of random coefficient fields $A$ is stationary (meaning the joint distribution of $A$ at any two points in $\R^d$ is the same) and ergodic (meaning any shift-invariant random variable is almost surely constant, a property encoding decorrelation of coefficient fields over large scales), they showed that correctors are almost surely sublinear and can be used to define the homogenized coefficient
\begin{equation}\nonumber
 \Ah e_i := \en{ A(e_i + \nabla \phi_{e_i}) }.
\end{equation}
Since the problem is linear, it clearly suffices to study the $d$ correctors $\phi_i := \phi_{e_i}$ for $i=1,\ldots,d$. Second, borrowing notation from the statistical physics, $\en{\cdot}$ stands for the ensemble average (expected value) with respect to a probability distribution on the space of coefficient fields $A$. Here and also later, we will often drop the argument $A$ in random quantities like the corrector as well as the argument $x$ related to the spatial dependence of quantities like the coefficient field $A$ or the corrector $\phi$. 

Both mentioned works~\cite{kozlov, papvar} were purely qualitative in the sense that they showed the sublinearity of the corrector in the limit of large scales without any rate. Assuming the correlation of the coefficient fields decays with a specific rate (either encoded by some functional inequality like the Spectral Gap estimate or the Logarithmic Sobolev Inequality, or by some mixing conditions or even assuming finite range of dependence), one goal of quantitative theory is to \emph{quantify} the sublinearity (smallness) of the corrector and consequences thereof. 

Though the present result is purely deterministic in the sense that it translates the fact that the energy of any $A$-harmonic function satisfies ``mean value property'' from some scale on (a fact that follows from the sublinearity of the corrector, see \cite{GNO4}) into estimates on Green's function and its derivatives, we will first mention some recent results related to sublinearity of the corrector without going too much into details. 

In~\cite{GNO4}, Gloria, Neukamm, and Otto introduced the random variable $r_* = r_*(A)$ called \emph{minimal radius}, which for given fixed $\delta = \delta(d,\lambda) > 0$ (here $\lambda$ denotes the ellipticity contrast) is defined as
\begin{equation}\label{defrstar}
 r_* := \inf \biggl\{ r \ge 1, \textrm{ dyadic } : \forall R \ge r, \textrm{ dyadic }: \frac{1}{R^2} \fint_{B_R} \biggl|(\phi,\sigma) - \fint_{B_R} (\phi,\sigma)\biggr|^2 \le \delta \biggr\}.
\end{equation}
Here $(\phi,\sigma)$ stands for the augmented corrector, where the new element $\sigma$ (called vector potential) can be used for obtaining good error estimates and which was originally introduced for the periodic homogenization (see, e.g., \cite{AL1}). Here and in what follows $B_R$ stands for a ball of radius $R$ centered at the \emph{origin} and $\fint$ denotes the average integral. In~\cite{GNO4} they showed that for small enough $\delta=\delta(d,\lambda)$ the sublinearity of the corrector implies the mean value property, meaning that for $R \ge r \ge r_*$ and for any $A$-harmonic function $u$ on $B_R$ (i.e., a solution of $-\nabla \cdot A \nabla u = 0$ in $B_R$) one has
\begin{equation}\nonumber
 \fint_{B_r} |\nabla u|^2 \le C(d,\lambda) \fint_{B_R} |\nabla u|^2.
\end{equation}
Moreover, assuming that the ensemble on the coefficient fields satisfies a coarsened version of the Logarithmic Sobolev Inequality, they showed that the minimal radius $r_*$ has stretched exponential moments 
\begin{equation}\nonumber
 \en{ \exp \p{ \tfrac{1}{C} r_*^{d(1-\beta)} } } \le C,
\end{equation}
where $0 \leq \beta < 1$ appearing in the exponent is related to the coarsening rate in the Logarithmic Sobolev Inequality. Observe that in the case $\beta = 0$, i.e., the case when we consider the Logarithmic Sobolev Inequality without coarsening, this is the optimal Gaussian bound.

Recently, reviving the parabolic approach used in the discrete setting \cite{GNO2short}, which has a benefit of conveniently disintegrating contributions to the corrector from different scales, Gloria and Otto~\cite{GloriaOttoNearOptimal} obtained a similar results assuming the coefficient fields have finite range of dependence (it is a known fact that the assumption of the finite range of dependence does not imply any of the functional inequalities we have mentioned above). 
As a by-product, assuming finite range of dependence Gloria and Otto get the estimates for the minimal radius $r_*$ with optimal stochastic integrability of the form
\begin{equation}\nonumber
 \en{ \exp \p{ \tfrac{1}{C} r_*^{d(1-\epsilon)} } } < \infty,\quad \forall \epsilon > 0.
\end{equation}

Finally, on the other side of the spectrum, Fischer and Otto~\cite{FischerOtto2} combined Meyer's estimate together with sensitivity analysis to show that for strongly correlated coefficient fields (more precisely, they consider coefficient fields which are $1$-Lipschitz images of a stationary Gaussian field with correlations bounded by $|x|^{-\beta}$, where $0 < \beta \ll 1$ is coming from Meyer's estimates) it holds
\begin{equation}\nonumber
 \en{ \exp \p{ \tfrac{1}{C} r_*^\beta } } \le C. 
\end{equation}

For the sake of completeness, without discussing any details, let us also mention the work of Armstrong and Smart~\cite{armstrongsmart2014} which predates the previously mentioned works of Otto and coauthors (see also subsequent works \cite{ArmstrongKuusiMourrat,ArmstrongMourrat} for more general results) and  which contains estimates with the optimal stochastic integrability for some quantity related but different from $r_*$. 

In the present paper we will obtain deterministic estimates for the Green's function based on the minimal radii $r_*$ at different points. More precisely, fixing two points $x_0, y_0 \in \R^d$, we take as the input the coefficient field $A$ and the corresponding minimal radii $r_*(x_0), r_*(y_0)$ (here and in what follows $r_*(x_0)$ stays for the minimal radius $r_*$ of the shifted coefficient field $A(\cdot - x_0)$~), and produce estimates on the Green's function $G$ and its derivatives $\nabla_xG, \nabla_yG, \nabla_x\nabla_yG$, averaged over small scale around the points $x_0$ and $y_0$. This averaging is necessary since we do not assume any smoothness of the coefficient fields. 

Our only goal in this paper is to obtain bounds, and not to show existence (or other properties) of the Green's function. In fact, a well known counterexample of De Giorgi \cite{DeGiorgiCounterexample} shows that there are uniformly elliptic coefficient fields for which the Green's function does not exist. Nevertheless, as recently shown in~\cite{ConlonGiuntiOtto} by Conlon, Otto, and the second author, this is not a generic behavior. More precisely, in~\cite{ConlonGiuntiOtto} they show that for \emph{any} uniformly elliptic coefficient field $A$ the Green's function $G=G(A;x,y)$ exists at \emph{almost every} point $y \in \R^d$, provided the dimension $d \ge 3$. Therefore, in the case $d \ge 3$, we will assume that the Green's function $G(A;\cdot,y) \in L^1_{\textrm{loc}}(\R^d)$ exists, at least in the almost everywhere sense (i.e., for a.e. $y \in \R^d$), and focus solely on the estimates. Since in $\R^2$ the Green's function does not have to exist, but its ``gradient'' can possibly exists, using a reduction from $3D$ (where the Green's function exists) in Section~\ref{sec2d} we construct and estimate $\nabla G$. 

There are several works studying estimates on the Green's function in the context of uniformly elliptic equations with random coefficients. Using De Giorgi-Nash-Moser approach for a parabolic equation (which is naturally restricted to the scalar case), Delmotte and Deuschel~\cite{delmottedeuschel} obtained annealed estimates on the first and second gradient of the Green's function, in $L^2$ and $L^1$ in probability respectively, under mere assumption of stationarity of the ensemble (see also~\cite{MO2} for a different approach). Using different methods, Conlon, Otto, and the second author~\cite{ConlonGiuntiOtto} recently obtained similar estimates, together with other properties of the Green's function, but without the restriction to the scalar case. For a single equation and in the discrete case, assuming that the spatial correlation of the coefficient fields decays sufficiently fast to the effect that the Logarithmic Sobolev Inequality is satisfied, Marahrens and Otto~\cite{MO} upgraded the Delmotte-Deuschel bounds to any stochastic moments. Recently, for a single equation this work was extended by Gloria and Marahrens~\cite{gloriamarahrens} into the continuum setting. 

Before we state the main result, let us mention other works relating the smallness of the corrector and the properties of solutions to the heterogeneous equation. Together with Otto~\cite{BellaGiuntiOttoPCMI}, we compare the finite energy solution $u$ of 
\begin{equation}\nonumber
 - \nabla \cdot A \nabla u = \nabla \cdot g,
\end{equation}
with $g \in L^2(\R^d;\R^d)$ being supported in a unit ball around the origin, with \emph{twice} corrected solution  $u_\textrm{hom}$ of the homogenized equation 
\begin{equation}\nonumber
 - \nabla \cdot \Ah \nabla u_\textrm{hom} = \nabla \cdot \tilde g.
\end{equation}
Here by twice corrected we mean that first the right-hand side $g$ from the heterogeneous  equation is replaced by $\tilde g = g(\textrm{Id} + \nabla \phi)$ in the constant-coefficient equation, and second, we compare $u$ with $(\textrm{Id} + \phi_i \partial_i)v$ at the level of gradients. Using duality argument together with a compactness lemma, this gives an estimate of the difference between $\nabla_x \nabla_y G(x,y)$ and $\partial_i \partial_j G_\textrm{hom} (e_i + \nabla \phi_i(x)) \otimes (e_j + \nabla \phi_j(y))$ (averaged over small balls both in $x$ and $y$). In order to get such estimates, it is not enough to assume that the corrector is at most linear with small slope (as in~\eqref{defrstar}), but rather we need to assume that for some $\beta \in (0,1)$, it grows in the $L^2$-sense at most like $|x|^{1-\beta}$:
\begin{equation}\label{growthbeta}
 \frac{1}{R^2} \fint_{B_R} \biggl|(\phi,\sigma) - \fint_{B_R} (\phi,\sigma)\biggr|^2 \le C R^{-2\beta}, \quad \forall R \ge r_*.
\end{equation}
Hence, in comparison with the present work, in~\cite{BellaGiuntiOttoPCMI} we get a stronger statement (since we estimate the difference between the heterogeneous Green's function and corrected constant-coefficient Green's function while in the present paper we only control the heterogeneous Green's function alone), at the expense of stronger assumption on the smallness of the corrector and more involved proof. More precisely, here we show that the second mixed derivative of the Green's function $\nabla_x \nabla_y G$ behaves like $C|x-y|^{-d}$ (clearly this estimate is sharp in scaling since it agrees with the behavior of the constant-coefficient Green's function), while in \cite{BellaGiuntiOttoPCMI} we show that the homogenization error, i.e., the difference between $\nabla_x \nabla_y G$ and twice corrected mixed second derivative of the constant-coefficient Green's function, is estimated by $C|x-y|^{-(d+\beta)}$ - that means we gain a factor of $|x-y|^{-\beta}$, where the exponent $\beta \in (0,1)$ is the one appearing in \eqref{growthbeta}. 

Last, let us mention the work of Otto and the authors~\cite{BellaGiuntiOtto2nd}, where we push farther the results of~\cite{BellaGiuntiOttoPCMI} using higher order correctors. The second and  higher order correctors were introduced into the stochastic homogenization setup by Fischer and Otto~\cite{FischerOtto}, in order to extend the $C^{1,\alpha}$ regularity estimates on large scales~\cite{GNO4} to $C^{2,\alpha}$ estimates and $C^{k,\alpha}$ estimates respectively. In~\cite{BellaGiuntiOtto2nd}, under the assumption of smallness of the corrector, we obtain two results about $A$-harmonic function in exterior domains: first, for any integer $k$ we construct a finite dimensional space of functions such that the distance between any $A$-harmonic function in the exterior domain and this space is bounded by $C|x-y|^{-(d+k)}$ (this statement can be seen as an analogue of Liouville statements for finite energy solutions in the exterior domain). Second, assuming smallness of both the first and second order augmented correctors (i.e., also including the second order vector potential, which we had to introduce), compared with~\cite{BellaGiuntiOttoPCMI} we improve by $1$ the exponent in the estimate between the solution of the heterogeneous equation in the exterior domain and of some corrected solution of the constant-coefficient equation.

The paper is organized as follows: In the next section we will state our assumptions together with the main result, Theorem \ref{thm1}, and its corollaries, Corollary \ref{cor1}, Corollary 2,  and Corollary 3. In Section~\ref{sec3d} we prove Theorem~\ref{thm1} and in Section~\ref{sec2d} we give the argument for Corollary \ref{cor1}, which is the only corollary which does not immediately follow from the theorem.

\medskip

{\bf Notation.} Throughout the article, we denote by $C$ a positive generic constant which is allowed to depend on the dimension $d$ and the ellipticity contrast $\lambda$, and which may be different from line to line of the same estimate. By $\lesssim$ we will mean $\le C$. 
Finally, the integrals without specified domain of integration are meant as integrals over the whole space $\R^d$. 

\section{The main result}

We fix a coefficient field $A \in L^\infty(\R^d;\R^{d\times d})$, which we assume to be  uniformly elliptic in the sense that
\begin{equation}\label{coer}
\begin{aligned}
 \int_{\R^d} \nabla \varphi \cdot A(x) \nabla \varphi \dx
&\ge \lambda \int_{\R^d} \a{ \nabla \varphi }^2,& \quad &\forall \varphi \in \mathcal{C}^{\infty}_c(\R^d),\\
 |A(x)\xi| 
&\le |\xi|,& \quad &\forall \textrm{a.e. }x \in \R^d, \forall \xi \in \R^d,
\end{aligned}
\end{equation}
where $\lambda \in (0,1)$ is fixed throughout the paper. Then we have the following result:

\begin{theorem}\label{thm1}
 Let $d \ge 3$, let $A$ be a uniformly elliptic coefficient field on $\R^d$ in the sense of \eqref{coer}, and let $x_0,y_0 \in \R^d$ with $\a{x_0 - y_0} \ge 10$. For a point $x \in \R^d$, let $r_*(x)=r_*(A, x)$ denote a radius such that for $R \ge r \ge r_*(x)$ and any $A$-harmonic  function $u$ in $B_R(x)$ we have
 \begin{equation}\label{E37}
 \fint_{B_r(x)} \a{ \nabla u }^2 \le C(d,\lambda) \fint_{B_R(x)} \a{ \nabla u}^2.
\end{equation}
Let $G = G(A;x,y)$ be the Green's function defined through 
 \begin{equation}\nonumber
  -\nabla_x \cdot A \nabla_x G(A;\cdot,y) = \delta(\cdot-y),
 \end{equation}
assuming it exists for a.e. $y \in \R^d$. Then we have
\begin{align}\label{E47}
 \int_{B_1(x_0)} \int_{B_1(y_0)} \a{ \nabla_x \nabla_y G(A; x,y)}^2 \dx \dy &\le C(d,\lambda) \p{ \frac{r_*(x_0)r_*'(y_0)}{|x_0-y_0|^2}}^{d},
\\ \label{E52}
\int_{B_1(x_0)} \int_{B_1(y_0)} \a{ \nabla_y G(A; x,y)}^2 \dx \dy &\le C(d,\lambda) |x_0-y_0|^2 \p{ \frac{r_*(x_0)r_*'(y_0)}{|x_0-y_0|^2}}^{d},
\\ \label{E53}
\int_{B_1(x_0)} \int_{B_1(y_0)} \a{ \nabla_x G(A; x,y)}^2 \dx \dy &\le C(d,\lambda) |x_0-y_0|^2 \p{ \frac{r_*'(x_0)r_*(y_0)}{|x_0-y_0|^2}}^{d},
\\ \label{E54}
\int_{B_1(x_0)} \int_{B_1(y_0)} \a{ G(A; x,y)}^2 \dx \dy &\le C(d,\lambda) |x_0-y_0|^4 \p{ \frac{r_*(x_0)r_*'(y_0) + r_*'(x_0)r_*(y_0)}{|x_0-y_0|^2}}^{d}.
\end{align}
where $r_*'(y)=r_*(A^t, y)$ denotes the minimal radius for the adjoint coefficient field $A^t$ at a point $y$. 
\end{theorem}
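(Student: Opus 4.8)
\medskip

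The plan is to bootstrap from the defining mean-value property \eqref{E37} (the "excess decay" encoded in $r_*$) to pointwise-in-scale Caccioppoli-type bounds for the Green's function, starting from the second mixed derivative estimate \eqref{E47} and then integrating twice in the radial variable to get the gradient and the function estimates \eqref{E52}--\eqref{E54}. The key observation is that for fixed $y \in B_1(y_0)$, the function $x \mapsto G(A;x,y)$ is $A$-harmonic in $B_R(x_0)$ for any $R \le \tfrac{1}{2}|x_0-y_0|$ (since $y$ is far from this ball), and likewise $y \mapsto G(A;x,y)$ is $A^t$-harmonic away from $x$; moreover $x \mapsto \nabla_y G(A;x,y)$ is $A$-harmonic and $y \mapsto \nabla_x G$ is $A^t$-harmonic on the respective far balls. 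So I would apply \eqref{E37} to these harmonic functions to transfer energy from the dyadic scale $R \sim |x_0-y_0|$ down to scale $r_*(x_0)$ (resp.\ $r_*'(y_0)$), picking up a factor $(r/R)^d$ from the normalization of the averaged integrals each time we shrink a ball — this is exactly where the power $d$ and the ratios $r_*/|x_0-y_0|^2$ in the statement come from.

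\medskip

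Concretely, for \eqref{E47} I would proceed as follows. First, fix $y$ and consider the $A$-harmonic function $u := G(A;\cdot,y)$ on $B_R(x_0)$ with $R = \tfrac14|x_0-y_0|$. Apply \eqref{E37} at $x_0$ at scale $r = r_*(x_0)$ to bound $\fint_{B_{r_*(x_0)}(x_0)} |\nabla_x G|^2 \lesssim \fint_{B_R(x_0)} |\nabla_x G|^2$; because \eqref{E37} is a mean-value property for energies of harmonic functions, it can be iterated/self-improved (as in \cite{GNO4}) to give the analogous bound for $\nabla_x u$ itself as an $A$-harmonic vector-valued object, or alternatively one differentiates after localizing. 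Then do the same in the $y$ variable: $\nabla_x G(A;x,\cdot)$ is $A^t$-harmonic in $B_R(y_0)$, so \eqref{E37} at $y_0$ for $A^t$ (that is, $r_*'(y_0)$) transfers its energy from scale $R$ down to $r_*'(y_0)$. Combining, the left side of \eqref{E47} (an unnormalized integral over unit balls, so $\lesssim \fint_{B_{r_*(x_0)}(x_0)}\fint_{B_{r_*'(y_0)}(y_0)} |\nabla_x\nabla_y G|^2 \cdot |B_1|^2$ after using $r_* \ge 1$) is controlled by $\fint_{B_R(x_0)}\fint_{B_R(y_0)} |\nabla_x\nabla_y G|^2$ times $(r_*(x_0)/R)^? (r_*'(y_0)/R)^?$; the bookkeeping of the $(r/R)^d$ factors from switching between normalized and unnormalized integrals yields precisely $(r_*(x_0) r_*'(y_0)/|x_0-y_0|^2)^d$. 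Finally one must bound the ``anchor'' term $\fint_{B_R(x_0)}\fint_{B_R(y_0)} |\nabla_x\nabla_y G|^2$ by $C|x_0-y_0|^{-2d}$ — the scaling of the constant-coefficient Green's function — which follows from the global $L^2$ energy identity $\int |\nabla_x G(\cdot,y)|^2 = G(y,y)$-type bounds together with Caccioppoli on the far region, i.e.\ from the deterministic $L^2$ estimate $\int_{\R^d \setminus B_{|x_0-y_0|/2}(y)} |\nabla_x G|^2 \lesssim |x_0-y_0|^{2-d}$, and then one more Caccioppoli in $y$.

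\medskip

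For \eqref{E52} and \eqref{E53} I would integrate \eqref{E47} along rays: writing $G(A;x,y) = -\int_1^\infty \partial_t\big(\,\text{a radial average of } G\,\big)\,dt$ is awkward, so instead I would use the cleaner route of a Caccioppoli inequality ``with the wrong sign'': for the $A^t$-harmonic function $v := \nabla_x G(A;x,\cdot)$ on $B_R(y_0)$, one has $\fint_{B_{R/2}(y_0)}|v|^2 \lesssim R^2 \fint_{B_R(y_0)} |\nabla v|^2 + (\text{average of }v)^2$, but the mean-value structure lets us instead relate $\fint_{B_1(y_0)} |\nabla_x G|^2$ directly to $R^2 \fint_{B_R}|\nabla_y\nabla_x G|^2 + \fint_{B_R}|\nabla_x G|^2$ and then recurse on the last term down the dyadic scales from $|x_0-y_0|$ to $r_*'(y_0)$, where at the bottom scale we use that far-field energies of $G$ are of order $|x_0-y_0|^{2-d}$. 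The extra factor $|x_0-y_0|^2$ in \eqref{E52}--\eqref{E53} (resp.\ $|x_0-y_0|^4$ in \eqref{E54}) is exactly the $R^2$ (resp.\ $R^4$) one loses each time one trades a derivative for a factor of the scale via Caccioppoli/Poincaré. For \eqref{E54} one repeats this once more in each variable, and the symmetric combination $r_*(x_0)r_*'(y_0) + r_*'(x_0)r_*(y_0)$ appears because $G$ itself (unlike $\nabla_x G$ or $\nabla_y G$) is $A$-harmonic in $x$ \emph{and} $A^t$-harmonic in $y$, so both orders of descent are available and one takes whichever is needed.

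\medskip

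I expect the main obstacle to be the rigorous justification of applying the mean-value property \eqref{E37} not just to $G$ itself but to its gradients $\nabla_x G$, $\nabla_y G$, $\nabla_x\nabla_y G$ — since \eqref{E37} is stated for scalar (or vector) $A$-harmonic functions and the derivatives of $G$ are $A$-harmonic only away from the pole, one needs to check that the localization does not spoil the mean-value property, and more delicately, one must control the \emph{subtracted averages} (the $\fint_{B_R}(\cdot)$ terms that appear whenever one uses Poincaré rather than the pure mean-value property): showing these averages of $\nabla_x G$, $\nabla_y G$ over far balls are themselves of the right order $|x_0-y_0|^{1-d}$ etc.\ requires a separate deterministic argument, essentially the Caccioppoli inequality applied on annuli together with the energy identity for $G$. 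Getting all the powers of $|x_0-y_0|$ to line up correctly in the bookkeeping between normalized and unnormalized integrals, and making sure the constant $\delta=\delta(d,\lambda)$ in the definition of $r_*$ is not needed (only \eqref{E37} is), is the second delicate point.
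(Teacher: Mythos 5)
Your descent mechanism (applying \eqref{E37} to the $A$-harmonic functions $x\mapsto \nabla_y G(x,y)$ and, via the adjoint, $y\mapsto \nabla_x G(x,y)$ to pass from scale $R\sim|x_0-y_0|$ down to $r_*(x_0)$, $r_*'(y_0)$) is sound, but the argument hinges on an ``anchor'' bound at scale $R_0\sim |x_0-y_0|$ that you do not actually have. The identity $\int|\nabla_x G(\cdot,y)|^2 = G(y,y)$ is vacuous here (both sides are infinite, since $\nabla_x G$ is not square integrable across the pole), and the deterministic, \emph{pointwise-in-$y$} far-field estimate $\int_{\R^d\setminus B_{R}(y)}|\nabla_x G(\cdot,y)|^2\lesssim R^{2-d}$ with a constant $C(d,\lambda)$ is exactly the kind of quenched per-singularity bound that is not available for general uniformly elliptic \emph{systems}: it is usually derived from the pointwise bound $|G|\lesssim |x-y|^{2-d}$ via De Giorgi--Nash--Moser, which the paper deliberately avoids (De Giorgi's counterexample shows $G$ need not even exist, and no pointwise control holds). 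For systems such far-field energy bounds are only known \emph{after averaging in $y$ over a ball of radius $\sim R$} (this is the content of the Conlon--Giunti--Otto bounds, and essentially what the theorem itself establishes), so invoking them as an input makes the argument circular. The same objection applies to your treatment of \eqref{E52}--\eqref{E54}, where the ``subtracted averages'' of $\nabla_x G$ and of $G$ over far balls are again assumed to be of the constant-coefficient order pointwise in $y$.

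The paper's proof avoids any per-$y$ input by a duality argument in the spirit of Avellaneda--Lin: for arbitrary $f\in L^2(B_{R_0}(y_0);\R^d)$ one solves $-\nabla\cdot A\nabla u=-\nabla\cdot f$, uses the plain energy estimate together with \eqref{E37} for $u$ (which is $A$-harmonic in $B_{R_0}(x_0)$), and dualizes in $f$; this yields, for any linear functional $F$ with $|F(\nabla v)|^2\le\int_{B_\rho(x_0)}|\nabla v|^2$, the bound $\int_{B_{R_0}(y_0)}|F(\nabla_x\nabla_y G(\cdot,y))|^2\,\mathrm{d}y\lesssim (r_*(x_0)/R_0)^d$ --- an estimate that is automatically averaged in $y$ --- and then the mean-value property for the $A^t$-harmonic function $y\mapsto F(\nabla_x G(\cdot,y))$ supplies the second factor $(r_*'(y_0)/R_0)^d$. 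Two further steps that your sketch does not address are then needed: (i) duality only controls functionals of the gradient bounded by the local energy, so upgrading to the full norm $\int_{B_1(x_0)}|\nabla_x\nabla_y G|^2\,\mathrm{d}x$ requires the expansion in Neumann eigenfunctions (finitely many such functionals plus a remainder $\lambda_N^{-1}\int|\nabla\cdot|^2$), Caccioppoli, and an absorption iteration over radii $\rho_k\in[1,2]$, including a truncation argument to justify the qualitative finiteness; (ii) \eqref{E52} and \eqref{E54} are reduced by Poincar\'e to the single functional $F(v)=\fint_{B_1(x_0)}v$ and handled by the same duality with Lemma~\ref{lm1} and Sobolev (for \eqref{E54} the right-hand side $f$ rather than $\nabla\cdot f$ produces the extra $R_0^2$), not by integrating \eqref{E47} along rays or recursing on dyadic scales.
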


Though the Green's function does not have to exist in $2D$, with the help of the Green's function in $3D$ we can at least define and estimate ``its gradient \& second mixed derivatives'':

\begin{corollary}\label{cor1}
Let $d=2$. Let $A$ be a uniformly elliptic coefficient field on $\R^2$ in the sense of \eqref{coer}, such that for its extension into $\R^3$ of the form
\begin{equation}\label{2d1}
 \bar A(x,x_3) := \begin{pmatrix} A(x) & 0\ \\ 0 & 1\ \end{pmatrix}
\end{equation}
there exists two points $\bar X, \bar Y \in \R^3$ so that the minimal radii for $\bar A^t$ and $\bar A$ at those points are finite, respectively (i.e., $r_*(\bar A^t,\bar X) < \infty$, $r_*(\bar A,\bar Y) < \infty$). 

Then for a.e. $y \in \R^2$ there exists a function on $\R^2$, which we denote $\nabla G(A;\cdot,y)$, so that it satisfies in a weak sense 
\begin{equation}\nonumber
 -\nabla_x \cdot A \nabla G(A;\cdot ,y) = \delta(\cdot-y).
\end{equation}
Moreover, given $x_0,y_0 \in \R^2$ with $\a{x_0 - y_0} \ge 10$, we have estimates for $\nabla G$ as well as for $\nabla_y \nabla G$:
\begin{align}
\int_{B_1(x_0)}\int_{B_1(y_0)} |\nabla_y\nabla{G}(A; x,y)|^2 \dx \dy&\le C(\lambda)  \frac{\p{r_*(A,x_0)r_*(A^t,y_0)}^2}{|x_0-y_0|^4},\label{2d5b}\\
\int_{B_1(x_0)}\int_{B_1(y_0)} |\nabla{G}(A; x,y)|^2 \dx \dy &\le C(\lambda)  \frac{\p{r_*(\bar A^t,(x_0,0))r_*(\bar A,(y_0,0))}^2}{|x_0-y_0|^2}.\label{2d5}
\end{align}
\end{corollary}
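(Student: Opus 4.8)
The plan is to reduce the two-dimensional problem to the three-dimensional one, where Theorem \ref{thm1} applies to the extended coefficient field $\bar A$. First I would observe that $\bar A$, defined by \eqref{2d1}, is uniformly elliptic on $\R^3$ with the same ellipticity contrast $\lambda$, so by the hypothesis that $r_*(\bar A^t, \bar X)$ and $r_*(\bar A, \bar Y)$ are finite at \emph{some} pair of points, the regularity/mean-value property \eqref{E37} propagates: since $\bar A$ and $\bar A^t$ are $x_3$-independent, the minimal radius is finite at \emph{every} point (the $A$-harmonic functions on balls centered at different heights are related by vertical translation, up to the standard covering/iteration argument that upgrades a single finite radius to a locally bounded one). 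Hence the three-dimensional Green's function $\bar G(\bar A; X, Y)$ exists for a.e. $Y \in \R^3$ by \cite{ConlonGiuntiOtto}, and Theorem \ref{thm1} gives the bounds \eqref{E47}--\eqref{E54} for $\bar G$ with $r_*$ evaluated for $\bar A$.

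Next I would \emph{define} $\nabla G(A;\cdot,y)$ in $2D$ by projecting the $3D$ object. The natural candidate is obtained by integrating $\bar G$ against a $\delta$-type source spread out in the $x_3$-direction; more precisely, for $x \in \R^2$ set
\begin{equation}\nonumber
 \nabla G(A;x,y) := \int_{\R} \nabla_x \bar G(\bar A; (x,0),(y,t)) \ud{t},
\end{equation}
where only the first two components of $\nabla_{\bar x}$ are kept. Because $\bar A$ is independent of $x_3$ and the right-hand side $\delta((y,t))$ integrated over $t$ collapses to $\delta(y) \otimes 1$ in the $x_3$-variable, one checks that this $\R^2$-vector field solves $-\nabla_x \cdot A \nabla G(A;\cdot,y) = \delta(\cdot - y)$ weakly in $\R^2$: testing with $\varphi \in \mathcal{C}^\infty_c(\R^2)$ and using $\varphi$ (extended trivially in $x_3$) as a test function for the $3D$ equation, the $x_3$-derivatives drop out by the block structure of $\bar A$ and the $t$-integral produces exactly $\varphi(y)$. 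One must also verify the $t$-integral converges, which follows from the $3D$ gradient decay $|\nabla_x \bar G| \lesssim |X-Y|^{-2}$ (a pointwise consequence, after a Caccioppoli/mean-value step, of \eqref{E52}); integrability in $t$ near $t=\infty$ is then clear, and the local averaging in $x$ absorbs the (integrable) singularity at $t=0$.

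Finally, for the quantitative bounds \eqref{2d5b} and \eqref{2d5}, I would combine the definition above with Minkowski's inequality and the $3D$ estimates. For \eqref{2d5}: taking $\bar x_0 = (x_0,0)$, $\bar y_0 = (y_0,0)$, apply Minkowski in the $t$-integral to $\| \nabla G(A;\cdot,\cdot) \|_{L^2(B_1(x_0)\times B_1(y_0))}$, bound each slice by $\bigl(\int_{B_1(\bar x_0)}\int_{B_1(\bar y_0)+te_3} |\nabla_x \bar G|^2\bigr)^{1/2}$, use \eqref{E52} for $\bar A$ with $|X_0 - Y_0| \sim |x_0-y_0| + |t|$, and integrate the resulting $\bigl(r_*(\bar A^t,\bar x_0) r_*(\bar A,\bar y_0)\bigr)^{3/2}(|x_0-y_0|+|t|)^{-3+1}$-type expression over $t \in \R$; the power $-2$ is integrable and yields the claimed $(\cdots)^2/|x_0-y_0|^2$ after using that $r_*$ for $\bar A$ at different heights is comparable (here the $x_3$-independence is used once more). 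For \eqref{2d5b} one argues identically starting from \eqref{E47}, noting that $\nabla_y$ acting on $\bar G$ commutes with the construction and that for the $x_3$-independent field the $2D$ minimal radii $r_*(A,x_0)$, $r_*(A^t,y_0)$ coincide with their $3D$ counterparts, so no extension-point enters \eqref{2d5b}.

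The main obstacle I expect is the second step: making rigorous the claim that the $t$-integrated $3D$ Green's function is genuinely a weak solution of the $2D$ equation and that the construction does not depend on the (a.e.-defined) ambiguities in $\bar G$. This requires care with the distributional meaning of $\delta(y)\otimes 1$, with Fubini across the a.e.-existence statement of \cite{ConlonGiuntiOtto}, and with justifying the interchange of $\nabla_x$ and $\int \ud{t}$ — all of which hinge on having, \emph{before} passing to the limit, the deterministic pointwise gradient bound $|\nabla_x \bar G(X,Y)| \lesssim |X-Y|^{-2}$ on the relevant range, extracted from \eqref{E52} by interior regularity. Everything downstream is then a routine application of Minkowski's inequality and the scaling bookkeeping.
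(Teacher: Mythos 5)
Your overall strategy — extend $A$ to $\bar A$ on $\R^3$, define $\nabla G$ by integrating $\nabla_x\bar G$ over the vertical direction, verify the weak $2D$ equation, then transfer the $3D$ bounds of Theorem~\ref{thm1} — is indeed the paper's route for the existence part and for \eqref{2d5}, but the two steps you delegate to "routine" arguments are precisely where the work lies, and as written they fail. You cannot use $\varphi$ extended trivially in $x_3$ as a test function (it is not compactly supported in $\R^3$), and you cannot extract a pointwise bound $|\nabla_x\bar G(\bar x,\bar y)|\lesssim|\bar x-\bar y|^{-2}$ from \eqref{E52}--\eqref{E53} "by interior regularity": no smoothness of $A$ is assumed and the result is claimed for systems, so only $L^2$ bounds averaged over scales $\ge r_*$ are available. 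The paper instead cuts off in $x_3$ with functions $\eta_n$, must control the cross term $\int\zeta\,\eta_n'\,\partial_{x_3}\bar G$, and for that proves the integrated bound \eqref{2d9} by covering the infinite cylinder with balls of radius $M\ge\max(r_*(\bar A^t,\bar X),r_*(\bar A,\bar Y))$, applying Theorem~\ref{thm1} at that scale (Remark~\ref{rmk1}), and invoking $\nabla_{\bar x}\bar G\in L^1_{\mathrm{loc}}(\R^3\times\R^3)$ near the diagonal; in addition, the a.e.-in-$\bar y$ existence of $\bar G$ forces a mollification in $\bar y$ and a Lebesgue-point argument to show independence of the choice of $y_3$. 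None of this is supplied by your pointwise-decay shortcut.

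The quantitative step has two further genuine gaps. Your Minkowski-in-$t$ argument bounds the $L^2$ norm of a restriction of $\nabla_x\bar G$ to the $2D$ slice $\{x_3=0\}\times\{y_3=t\}$ by the $3D$ estimate over products of balls; but Theorem~\ref{thm1} controls only bulk integrals over $3D$ balls, and no trace estimate on lower-dimensional planes is available without regularity of $A$. Moreover, even granting the slice bound, your bookkeeping gives $\int\int|\nabla G|^2\lesssim (r_xr_y)^3|x_0-y_0|^{-2}$, i.e.\ the third power of the radii rather than the claimed second power in \eqref{2d5}; "comparability of $r_*$ at different heights" does not repair this. The paper gets the stronger bound by exploiting the $y_3$-independence of $\nabla G$ to average over a slab of thickness $r_y$ (so the $\bar y$-integration becomes genuinely three-dimensional), splitting the $x_3$-integral into intervals of length $r_x$, and applying a weighted Cauchy--Schwarz with the weights $a_n$ together with Remark~\ref{rmk1} (balls of radius $r_x$, $r_y$ instead of $1$). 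Finally, your treatment of \eqref{2d5b} is wrong: arguing "identically" through $3D$ would produce $3D$ radii, and your claim that the $2D$ and $3D$ minimal radii coincide is unsupported — the paper only records the one-sided inequality $r_*(A,x)\le r_*(\bar A,\bar x)$ and explicitly warns that the projection route yields the weaker result. The actual proof of \eqref{2d5b} is direct in $2D$: once \eqref{2d10} yields the representation formula $\nabla u(x)=\int\nabla_y\nabla G(x,y)\cdot f(y)\dy$, the duality proof of \eqref{E47} applies verbatim (it nowhere uses $d\ge3$), which is why genuinely two-dimensional radii appear there.
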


\begin{remark}
 Assuming that the coefficient field $A$ in the statement of Corollary~\ref{cor1} is chosen at random with respect to a stationary and ergodic ensemble, by the standard ergodic argument (see, e.g., \cite{GNO4}), applied in $3D$ to the ensemble obtained as a push-forward of the $2D$ ensemble through~\eqref{2d1}, the assumption on the finiteness of the minimal radii is almost surely satisfied (say with $\bar X=\bar Y=(0,0,0)$). 
\end{remark}

\begin{remark}\label{rmk1}
 It is clear from the proof of Theorem~\ref{thm1} that all the above estimates, i.e. \eqref{E47}-\eqref{2d5}, are true also if the domains of integration $B_1(x_0)$ and $B_1(y_0)$ are replaced by larger balls with the corresponding radii $r_*$.  Moreover, the radii of the balls could be even larger than the minimal radii (as long as these new radii are not larger than one third of a distance between centers of those balls), in which case we need to replace the minimal radii on the right-hand sides of those estimates with the actual radii of the balls. 
\end{remark}

\begin{remark}
 The appearance of different minimal radii in \eqref{2d5b} and \eqref{2d5} (in \eqref{2d5b} the minimal radii are related to the equation in $2D$, while in \eqref{2d5} they are the minimal radii for the equation in $3D$) is not a typo. The reason is that while \eqref{2d5b} is proved directly in $2D$, the proof of \eqref{2d5} passes through $3D$ - hence the need to consider the minimal radii in $3D$. In view of the relation $r_*(A,x) \le r_*(\bar A,\bar x)$, which easily follows from the fact that any $A$-harmonic function in $B_R \subset \R^2$ can be trivially extended to an $\bar A$-harmonic function in $\bar B_R \subset \R^3$, the estimate \eqref{2d5} seems to be less optimal.
\end{remark}

For notational convenience we state the result for a single equation. Since in the proof of Theorem~\ref{thm1} we do not use any \emph{scalar} methods (like for example De Giorgi-Nash-Moser iteration), the result holds also in the case of elliptic systems - for that one just considers that $u$ has values in some finite-dimensional Hilbert space. Naturally, in that case all the constants will depend on the dimension of this Hilbert space. 

Using the Gaussian bounds on $r_*$ for the case of coefficient fields with finite range of dependence, which were obtained recently in~\cite{GloriaOttoNearOptimal}, Theorem~\ref{thm1} implies the following bounds:

\begin{corollary}
Suppose $\en{\cdot}$ is an ensemble of $\lambda$-uniformly elliptic coefficient fields
which is stationary and of unity range of dependence, and let $d \ge 2$. Then there exist $C(d,\lambda)$ such that for every two points $x_0,y_0 \in \R^d$, $|x_0 - y_0| \ge 10$, and every $\epsilon > 0$ we have 
\begin{align*}
 \en{ \exp \biggl( \biggl( C|x_0 - y_0|^{2d} \int_{B_1(x_0)} \int_{B_1(y_0)} |\nabla_x \nabla_y G( A ; x,y)|^2 \dx \dy \biggr)^{d(1-\epsilon)} \biggr) } &< \infty,
\\
 \en{ \exp \biggl( \biggl( C|x_0 - y_0|^{2d-2} \int_{B_1(x_0)} \int_{B_1(y_0)} |(\nabla_x,\nabla_y) G( A ; x,y)|^2 \dx \dy \biggr)^{d(1-\epsilon)} \biggr) } &< \infty,
\end{align*}
and in $d \ge 3$ also
\begin{equation}\nonumber
 \en{ \exp \biggl( \biggl( C|x_0 - y_0|^{2d-4} \int_{B_1(x_0)} \int_{B_1(y_0)} |G( A ; x,y)|^2 \dx \dy \biggr)^{d(1-\epsilon)} \biggr) } < \infty.
\end{equation}
\end{corollary}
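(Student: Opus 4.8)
The plan is to combine the deterministic estimates \eqref{E47}--\eqref{E54} of Theorem~\ref{thm1} (together with \eqref{2d5b}--\eqref{2d5} of Corollary~\ref{cor1} when $d=2$) with the stretched--exponential moment bound $\en{\exp(\tfrac{1}{C}r_*^{d(1-\epsilon)})}<\infty$ for the minimal radius under unity range of dependence, established in~\cite{GloriaOttoNearOptimal}. All the analytic content already sits in Theorem~\ref{thm1}; what remains is, first, to turn the products $r_*(x_0)r_*'(y_0)$ of two generically \emph{correlated} random radii appearing on the right-hand sides into a shape to which the scalar bound on a single $r_*$ applies, and second, to check that the ensembles we are forced to consider --- the adjoint $A^t$ in every dimension, and the extensions $\bar A$, $\bar A^t$ from \eqref{2d1} when $d=2$ --- still satisfy the hypotheses of~\cite{GloriaOttoNearOptimal}.

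First I would fix $r_*(A,x)$ to be the minimal radius \eqref{defrstar} of the shifted field $A(\cdot-x)$, with $\delta=\delta(d,\lambda)$ small enough that the regularity theory of~\cite{GNO4} applies; then $r_*(A,x)$ satisfies the mean value property \eqref{E37} demanded by Theorem~\ref{thm1}, and $\en{\exp(\tfrac{1}{C}r_*(A,0)^{d(1-\epsilon)})}<\infty$ for every $\epsilon>0$ by~\cite{GloriaOttoNearOptimal}. Transposition $A\mapsto A^t$ pushes $\en{\cdot}$ forward to an ensemble that is again $\lambda$-uniformly elliptic in the sense of \eqref{coer} --- the quadratic form $\int\nabla\varphi\cdot A^t\nabla\varphi=\int\nabla\varphi\cdot A\nabla\varphi$ is unchanged, and $|A^t\xi|=\sup_{|\eta|=1}\xi\cdot A\eta\le|\xi|$ --- and is still stationary and of unity range of dependence, so that $\en{\exp(\tfrac{1}{C}r_*(A^t,0)^{d(1-\epsilon)})}<\infty$ as well. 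When $d=2$ the same reasoning applies to $A\mapsto\bar A$ and $A\mapsto\bar A^t$: the matrix $\bar A$ of \eqref{2d1} is $\lambda$-uniformly elliptic on $\R^3$ (using $\lambda<1$ for the trivial third direction), and since $\bar A(x,x_3)$ depends only on $A(x)$ it inherits unity range of dependence; hence the $3D$ instance of~\cite{GloriaOttoNearOptimal} furnishes the needed moments of $r_*(\bar A,0)$ and $r_*(\bar A^t,0)$.

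The reduction is then the same for every estimate, so I describe it for \eqref{E47}. By stationarity $r_*(x_0):=r_*(A,x_0)$ has the law of $r_*(A,0)$ and $r_*'(y_0):=r_*(A^t,y_0)$ has the law of $r_*(A^t,0)$. Rearranging \eqref{E47} gives $|x_0-y_0|^{2d}\int_{B_1(x_0)}\int_{B_1(y_0)}|\nabla_x\nabla_y G|^2\,\dx\,\dy\le C(r_*(x_0)r_*'(y_0))^{d}$, and by the arithmetic--geometric mean inequality $(r_*(x_0)r_*'(y_0))^{d}\le\tfrac12\bigl(r_*(x_0)^{2d}+r_*'(y_0)^{2d}\bigr)$. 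Raising to the relevant power (which distributes over the sum up to a constant), using $e^{a+b}=e^{a}e^{b}$ together with the Cauchy--Schwarz inequality and taking $\en{\cdot}$, I would bound the quantity of interest by a product of two expectations, each of the form $\en{\exp(C\,r_*^{p})}$ for a single radius distributed as $r_*(A,0)$ or as $r_*(A^t,0)$; the power $p$ is precisely the one the input bound tolerates up to an arbitrarily small loss, so after splitting off the bounded contribution of $\{r_*\le C(\epsilon)\}$ one absorbs the deterministic constant $C$ and concludes via~\cite{GloriaOttoNearOptimal} applied with a slightly smaller $\epsilon$. The bound for $|(\nabla_x,\nabla_y)G|^2$ follows by adding \eqref{E52} and \eqref{E53}, which carry the prefactor $|x_0-y_0|^{2d-2}$ (one power of $|x_0-y_0|^2$ less than \eqref{E47}) and two products of radii; \eqref{E54}, with prefactor $|x_0-y_0|^{2d-4}$ and the sum $r_*(x_0)r_*'(y_0)+r_*'(x_0)r_*(y_0)$, is handled termwise and, since it genuinely involves the Green's function itself, is available only for $d\ge3$. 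When $d=2$ one uses instead \eqref{2d5b} (same structure, with the $2D$ radii) for the mixed second derivative and \eqref{2d5} (with the $3D$ radii, whence the need to have treated $\bar A$ and $\bar A^t$ in the previous step) for the gradient.

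I do not expect any serious obstacle: the heavy lifting is Theorem~\ref{thm1}, and the rest is organisational. The three points requiring care are that the right-hand sides are products of two radii that are \emph{not} independent, so they cannot be integrated separately and one must use the arithmetic--geometric mean inequality together with Cauchy--Schwarz to reduce to a single radius before invoking stationarity; that transposition and the $2D\!\to\!3D$ extension must be checked to preserve ellipticity, stationarity and finite range, so that the bound of~\cite{GloriaOttoNearOptimal} is legitimately available for $A^t$, $\bar A$ and $\bar A^t$; and that the deterministic constant $C(d,\lambda)$ must be absorbed into the stretched exponential at the cost of an arbitrarily small worsening of $\epsilon$.
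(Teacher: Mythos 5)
Your overall route is the one the paper intends: Corollary 2 is treated there as an immediate consequence of Theorem~\ref{thm1} (and, for $d=2$, of Corollary~\ref{cor1}) combined with the bound $\en{\exp(\tfrac{1}{C}r_*^{d(1-\epsilon)})}<\infty$ of \cite{GloriaOttoNearOptimal}, and your organisation --- AM--GM to decouple the correlated product $r_*(x_0)r_*'(y_0)$, Cauchy--Schwarz under $\en{\cdot}$, stationarity to reduce to a single radius, and the check that transposition preserves \eqref{coer}, stationarity and the range of dependence --- is exactly the right skeleton. However, the one quantitative claim you make, namely that ``the power $p$ is precisely the one the input bound tolerates up to an arbitrarily small loss'', does not hold for the exponent as printed. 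From \eqref{E47} the bracketed quantity is at most $C(d,\lambda)\p{r_*(x_0)r_*'(y_0)}^{d}$, so raising it to the power $d(1-\epsilon)$ and applying AM--GM leaves you needing $\en{\exp\p{c\,r_*^{2d^2(1-\epsilon)}}}<\infty$, whereas the input only tolerates powers strictly below $d$; and no rearrangement helps, since even if one radius were identically $1$ you would still need moments of order $d^2(1-\epsilon)>d$. What your argument actually delivers is the three estimates with the outer exponent $d(1-\epsilon)$ replaced by $\tfrac{1-\epsilon}{2}$ (equivalently, the power $d(1-\epsilon)$ applied after taking the $2d$-th root of the bracketed quantity, so that AM--GM lands exactly on $r_*^{d(1-\epsilon)}+r_*'^{d(1-\epsilon)}$). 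You should carry out this bookkeeping explicitly and flag the discrepancy with the stated exponent rather than asserting that it matches.

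A second concrete gap concerns $d=2$: you justify the moments of the three-dimensional radii $r_*(\bar A,\cdot)$, $r_*(\bar A^t,\cdot)$ needed for \eqref{2d5} by saying that $\bar A$ ``inherits unity range of dependence''. It does not: by \eqref{2d1} the field $\bar A$ is independent of $x_3$, hence perfectly correlated along vertical lines, so the push-forward ensemble on $\R^3$ has infinite range of dependence in that direction and the three-dimensional result of \cite{GloriaOttoNearOptimal} cannot be invoked off the shelf. A correct substitute is to control the $3D$ radius by the $2D$ one through the corrector characterization \eqref{defrstar}: the augmented correctors of $\bar A$ are the $x_3$-independent extensions of the two-dimensional ones (with vanishing components in the $e_3$ direction), so the excess in \eqref{defrstar} over $\bar B_R((x,0))$ is controlled by the corresponding two-dimensional excess over $B_R(x)$, and the required stretched exponential moments then follow from the two-dimensional bound; the paper's Remark on the ergodic argument only gives almost sure finiteness, not moments, so this step genuinely needs to be spelled out.
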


In the case of coefficient fields with stronger correlations we can use the result from~\cite{GNO4}:

\begin{corollary}
Suppose $d \ge 2$, and that the ensemble $\en{\cdot}$ is stationary and satisfies a logarithmic  Sobolev
inequality of the following  type:  There  exists a partition $\{ D \}$ of $\R^d$ not too coarse in the sense that for some $0 \le \beta < 1$ it satisfies 
\begin{equation*}
 \textrm{diam} (D) \le (\textrm{dist}(D) + 1)^\beta \le C(d) \textrm{diam}(D).
\end{equation*}
Moreover, let us assume that there is $0 < \rho \le 1$ such that for all random variables $F$
\begin{equation*}
 \en{ F^2 \log F^2 } - \en{F^2} \log \en{F^2} \le \frac{1}{\rho} \biggl< \left\| \frac{\partial F}{\partial A}\right\|^2 \biggr>,
\end{equation*}
where the carr\'e-du-champ of the Malliavin derivative is defined as 
\begin{equation*}
\biggl\| \frac{\partial F}{\partial A}\biggr\|^2 := \sum_D \biggl( \int_D \biggl|\frac{\partial F}{\partial A}\biggr| ^2 \biggr).
\end{equation*}
Then  there  exists  a  constant $0 < C < \infty$, depending only on $d, \lambda, \rho, \beta$, such that 
\begin{align*}
 \en{ \exp \biggl( \biggl( C|x_0 - y_0|^{2d} \int_{B_1(x_0)} \int_{B_1(y_0)} |\nabla_x \nabla_y G( A ; x,y)|^2 \dx \dy \biggr)^{d(1-\beta)} \biggr) } &< \infty,
\\
 \en{ \exp \biggl( \biggl( C|x_0 - y_0|^{2d-2} \int_{B_1(x_0)} \int_{B_1(y_0)} |(\nabla_x,\nabla_y) G( A ; x,y)|^2 \dx \dy \biggr)^{d(1-\beta)} \biggr) } &< \infty,
\end{align*}
and in $d \ge 3$ also
\begin{equation*}
 \en{ \exp \biggl( \biggl( C|x_0 - y_0|^{2d-4} \int_{B_1(x_0)} \int_{B_1(y_0)} |G( A ; x,y)|^2 \dx \dy \biggr)^{d(1-\beta)} \biggr) } < \infty.
\end{equation*}

\end{corollary}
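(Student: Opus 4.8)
The plan is to obtain this corollary as a direct consequence of the deterministic bounds of Theorem~\ref{thm1} (and, for $d=2$, of Corollary~\ref{cor1}), the \emph{only} probabilistic input being the stretched-exponential moment bound on the minimal radius that follows from the coarsened logarithmic Sobolev inequality, namely $\en{\exp(\tfrac{1}{C}r_*^{d(1-\beta)})}\le C$, the bound of~\cite{GNO4} recalled in the Introduction. Nothing new about the Green's function itself is needed.

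First I would absorb the powers of $|x_0-y_0|$ appearing in \eqref{E47}--\eqref{E54} into the prefactors that sit inside the exponentials: multiplying \eqref{E47} by $|x_0-y_0|^{2d}$ gives $|x_0-y_0|^{2d}\int_{B_1(x_0)}\int_{B_1(y_0)}|\nabla_x\nabla_y G|^2\dx\dy\le C\,(r_*(x_0)r_*'(y_0))^{d}$, combining \eqref{E52}--\eqref{E53} and multiplying by $|x_0-y_0|^{2d-2}$ bounds $|x_0-y_0|^{2d-2}\int\int|(\nabla_x,\nabla_y)G|^2$ by $C$ times a sum of $d$-th powers of products of the four minimal radii $r_*(A,x_0),r_*(A^t,x_0),r_*(A,y_0),r_*(A^t,y_0)$, and \eqref{E54} multiplied by $|x_0-y_0|^{2d-4}$ does the same for $\int\int|G|^2$ when $d\ge3$; when $d=2$ the estimates \eqref{2d5b} and \eqref{2d5} of Corollary~\ref{cor1} take this role, with the minimal radii of the $3D$ extension $\bar A$ entering the $\nabla G$ bound (as discussed in the remarks following Corollary~\ref{cor1}). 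Thus in every case the quantity placed inside the exponential is controlled, \emph{deterministically}, by a fixed multiple of a finite sum of products of $d$-th powers of minimal radii evaluated at $x_0$ and $y_0$ for the fields $A$ and $A^t$.

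What remains is then purely a computation on the probability space. By stationarity of $\en{\cdot}$, the laws of $r_*(A,x_0)$ and $r_*(A,y_0)$ agree with that of $r_*(A,0)$; moreover $A^t$ is again $\lambda$-uniformly elliptic, the pushforward of $\en{\cdot}$ under $A\mapsto A^t$ is again stationary, and — transposition being a linear isometry of coefficient fields compatible with the Malliavin derivative — it satisfies the same coarsened logarithmic Sobolev inequality, so that $r_*(A^t,x_0)$ and $r_*(A^t,y_0)$ have the law of $r_*(A^t,0)$ and obey the same moment bound. One then concludes with elementary exponential-moment inequalities: $\exp(a+b)\le\tfrac12(\exp(2a)+\exp(2b))$ to split the sums into products of exponentials, a Young-type inequality to turn each product of minimal radii into a sum of powers of single radii, H\"older's inequality on $\en{\cdot}$ to decouple the resulting cross terms, and a small enough choice of the constant $C$. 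I expect the exponent bookkeeping in this last step to be the real obstacle: to keep the stretched exponents below the critical value $d(1-\beta)$ for which the moment bound of~\cite{GNO4} is available, one must apply Theorem~\ref{thm1} only after extracting roots of its estimates (so that each minimal radius enters to the first power before the exponentiation) and use the hypothesis $d\ge2$ together with the precise powers of $|x_0-y_0|$ recorded in \eqref{E47}--\eqref{E54}.
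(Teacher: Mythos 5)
Your strategy coincides with the paper's own (the paper offers no separate proof, declaring this corollary an immediate consequence of Theorem~\ref{thm1}): insert the moment bound $\en{\exp(\tfrac1C r_*^{d(1-\beta)})}\le C$ of \cite{GNO4} into the deterministic estimates \eqref{E47}--\eqref{E54} (and \eqref{2d5b}--\eqref{2d5} for $d=2$), using stationarity, the invariance of the assumptions under $A\mapsto A^t$, and Young/H\"older in the ensemble. The problem is that the one step you explicitly defer --- the exponent bookkeeping --- is precisely the step that does not go through as you describe. Theorem~\ref{thm1} bounds each bracketed quantity by $C\,(r_*(x_0)r_*'(y_0))^d$; raising this to the power $d(1-\beta)$ produces $(r_*r_*')^{d^2(1-\beta)}$, which (since $r_*,r_*'\ge 1$) dominates $r_*^{d^2(1-\beta)}$, whereas \cite{GNO4} supplies stretched exponential moments only of order $d(1-\beta)$. ``Extracting roots'' of the deterministic estimate cannot change the power of $r_*$ that ends up inside the exponential, and decoupling the product $r_*(x_0)r_*'(y_0)$ --- by Young, or by Cauchy--Schwarz in $\en{\cdot}$, since independence is not available under an LSI-type assumption --- costs a further factor $2$ in the exponent. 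What your route honestly yields is the displayed moment bounds with outer exponent $(1-\beta)/2$ in place of $d(1-\beta)$ (equivalently: only quantities of size $r_*^{d(1-\beta)}+(r_*')^{d(1-\beta)}$ may be placed in the exponential). You must either carry out this bookkeeping and state the exponent it actually delivers, or identify an additional input justifying the larger exponent; as written, the statement is not established by your sketch.

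A second gap concerns $d=2$: for the first-derivative bound you rely on \eqref{2d5}, whose right-hand side involves the three-dimensional minimal radii $r_*(\bar A,(y_0,0))$ and $r_*(\bar A^t,(x_0,0))$ of the degenerate extension \eqref{2d1}. The two-dimensional LSI assumed in the corollary does not give a three-dimensional LSI for the pushforward ensemble (the extended field is perfectly correlated in the $x_3$-direction, so no admissible partition of $\R^3$ with the stated diameter/distance property exists), hence the moment bound of \cite{GNO4} cannot be quoted in $3D$ directly. One must argue separately that $r_*(\bar A,\cdot)$ is controlled by the corresponding two-dimensional quantity --- e.g. because the augmented correctors of $\bar A$ are the $x_3$-independent extensions of the two-dimensional ones, so their sublinearity on $3D$ balls reduces to the $2D$ statement --- and only then apply the $2D$ moment bound; for the mixed second derivative \eqref{2d5b} the radii are already two-dimensional and no such detour is needed. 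Your appeal to the remark on almost-sure finiteness of the $3D$ radii is not sufficient here: finiteness is qualitative, while this corollary needs their stretched exponential moments.
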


\section{Proof of Theorem \ref{thm1}}\label{sec3d}

The proof is inspired by a duality argument of Avellaneda and Lin \cite[Theorem 13]{AL1}, which they used to obtain Green's function estimates in the periodic homogenization. After stating and proving two auxiliary lemmas, we first prove the estimate on the second mixed derivative \eqref{E47}. Then, \eqref{E52} will follow from \eqref{E47} using Poincar\'e inequality and one additional estimate. Next we observe that \eqref{E53} can be obtained from \eqref{E52} by replacing the role of $x$ and $y$, which can be done by considering the adjoint $A^t$ instead of $A$. Finally, \eqref{E54} will follow from \eqref{E53} in a similar way as \eqref{E52} follows from \eqref{E47}. 

We thus start with the following two auxiliary lemmas. The first one is very standard:

\begin{lemma}[Caccioppoli inequality]\label{lmcac}
 Let $\rho > 0$, $\delta > 0$, and let $u$ be a solution of a uniformly elliptic equation $-\nabla \cdot A\nabla u = 0$ in $B_{(1+\delta)\rho}$. Then 
 \begin{equation}\label{E99}
  \int_{B_\rho} \a{\nabla u}^2 \le \frac{C(d)}{\lambda\rho^2\delta^2} \int_{B_{(1+\delta)\rho}} \a{u - c}^2
 \end{equation}
 for any $c \in \R$. 
\end{lemma}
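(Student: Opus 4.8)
\emph{Proof strategy.} The plan is the standard proof of Caccioppoli's inequality, written in the weak formulation used here. I may assume the right-hand side of \eqref{E99} is finite. First I would reduce to $c=0$: since $\nabla(u-c)=\nabla u$, the function $u-c$ is again a weak solution of $-\nabla\cdot A\nabla(u-c)=0$ in $B_{(1+\delta)\rho}$, so it suffices to prove
\[
 \int_{B_\rho}\a{\nabla u}^2 \;\le\; \frac{C(d)}{\lambda\rho^2\delta^2}\int_{B_{(1+\delta)\rho}}\a{u}^2 .
\]

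Next I would fix a cutoff $\eta\in\mathcal{C}^\infty_c(B_{(1+\delta)\rho})$ with $0\le\eta\le1$, $\eta\equiv1$ on $B_\rho$, and $\a{\nabla\eta}\le C(d)(\delta\rho)^{-1}$, and test the equation with $\varphi=\eta^2u$. This is admissible because $\eta^2u$ lies in the $H^1$-closure of $\mathcal{C}^\infty_c(B_{(1+\delta)\rho})$ (in the system case, with values in the underlying finite-dimensional Hilbert space). Using $\nabla(\eta^2u)=\eta^2\nabla u+2\eta u\nabla\eta$, this produces the identity
\[
 \int \eta^2\,\nabla u\cdot A\nabla u \;=\; -2\int \eta u\,\nabla\eta\cdot A\nabla u .
\]
For the left-hand side I would use ellipticity in the pointwise form $\xi\cdot A(x)\xi\ge\lambda\a{\xi}^2$ for a.e.\ $x$ (which follows from the integrated coercivity in \eqref{coer} by testing it against families of fast-oscillating functions) to bound it below by $\lambda\int\eta^2\a{\nabla u}^2$. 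For the right-hand side I would use $\a{A\nabla u}\le\a{\nabla u}$ together with the pointwise Cauchy--Schwarz and Young inequalities, $2\eta\a{\nabla u}\,\a{u}\,\a{\nabla\eta}\le\oh\lambda\,\eta^2\a{\nabla u}^2+\tfrac{2}{\lambda}\,\a{u}^2\a{\nabla\eta}^2$.

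Combining these and absorbing $\oh\lambda\int\eta^2\a{\nabla u}^2$ into the left-hand side would leave $\oh\lambda\int\eta^2\a{\nabla u}^2\le\tfrac{2}{\lambda}\int\a{u}^2\a{\nabla\eta}^2$; restricting the left integral to $B_\rho$, where $\eta\equiv1$, and inserting the bound on $\a{\nabla\eta}$ then gives \eqref{E99}. The only mildly non-routine point is deducing the pointwise ellipticity from the integrated form in \eqref{coer}; this can be avoided altogether by instead applying \eqref{coer} directly to $\varphi=\eta u$, substituting $\eta\nabla u=\nabla(\eta u)-u\nabla\eta$ together with the identity above, and absorbing the resulting cross terms --- slightly more bookkeeping, but using only \eqref{coer} as stated. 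I do not expect any genuine obstacle here: the estimate is entirely elementary.
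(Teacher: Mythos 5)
Your proposal is correct and is essentially the paper's own argument: reduce to $c=0$, test the equation with $\eta^2u$ for a cutoff $\eta$ with $|\nabla\eta|\lesssim(\delta\rho)^{-1}$, and absorb via \eqref{coer} and Young. One caveat: the detour through pointwise ellipticity is a scalar shortcut --- for systems (which the paper explicitly wants to cover) the integrated coercivity \eqref{coer} only yields rank-one (Legendre--Hadamard) pointwise positivity, which does not bound $\nabla u\cdot A\nabla u$ from below pointwise --- so the variant you mention at the end, applying \eqref{coer} directly to $\eta u$ and using only $|A\xi|\le|\xi|$ pointwise, is the one to keep, and it is exactly what the paper does. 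Also, the Young absorption as written produces $C(d)/(\lambda^2\rho^2\delta^2)$ rather than the stated $C(d)/(\lambda\rho^2\delta^2)$ (the single power of $\lambda$ comes from the symmetric-$A$ identity $\int\nabla(\eta u)\cdot A\nabla(\eta u)=\int u^2\,\nabla\eta\cdot A\nabla\eta$); this is immaterial for the paper, whose constants are all $C(d,\lambda)$, and the paper's one-line proof is equally loose on this point.
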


\begin{proof}
By considering $u-c$ instead of $u$, it is enough to show estimate (\ref{E99}) with $c = 0$. We test the equation for $u$ with $\eta^2 u$, where $\eta$ is a smooth cut-off function for $B_\rho$ in $B_{(1+\delta)\rho}$ with $\a{\nabla \eta} \lesssim (\delta\rho)^{-1}$, use \eqref{coer} and Young's inequality to get
\begin{equation}\nonumber
 \int_{\R^d} \a{ \nabla (\eta u) }^2 \le \frac{C(d)}{\lambda} \int \a{ \nabla \eta}^2 u^2. 
\end{equation}
Since $\a{\nabla \eta} \le \frac{C}{\rho \delta}$, \eqref{E99} immediately follows.
\end{proof}

\begin{lemma}\label{lm1}
 Let $R_0 \ge r_*(0)$, and let $u$ be an $A$-harmonic function in $B_{R_0}$.
 Then we have
 \begin{equation}\label{E130}
  \fint_{B_{r_*(0)}} \a{u}^2 \le C(d,\lambda) \fint_{B_{R_0}} \a{u}^2.
 \end{equation}
\end{lemma}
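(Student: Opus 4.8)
The plan is to deduce the $L^2$ mean-value property \eqref{E130} from the hypothesis \eqref{E37}, which is phrased for gradients, by combining \eqref{E37} with the Caccioppoli inequality and a Poincar\'e-type argument. The key point is that an $A$-harmonic function $u$ is only controlled through $\nabla u$, so we must recover information about $u$ itself from $\nabla u$ and from the fact that the estimate \eqref{E37} is scale-invariant and applies on every ball $B_\rho \subset B_{R_0}$ with $\rho \ge r_*(0)$.

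First I would fix a dyadic decomposition of the annular region between $B_{r_*(0)}$ and $B_{R_0}$: write $\rho_k = 2^k r_*(0)$ for $k = 0, 1, \dots, N$ where $\rho_N \approx R_0$. On each scale $\rho_k$ the hypothesis \eqref{E37} applied with $r = r_*(0)$ and $R = \rho_k$ gives $\fint_{B_{r_*(0)}} |\nabla u|^2 \le C \fint_{B_{\rho_k}} |\nabla u|^2$; combined with Caccioppoli \eqref{E99} on the pair $B_{\rho_k} \subset B_{2\rho_k}$ (choosing $\delta = 1$) we get $\fint_{B_{\rho_k}} |\nabla u|^2 \le \frac{C}{\rho_k^2} \fint_{B_{2\rho_k}} |u - c_k|^2$ for any constant $c_k$. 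Taking $c_k = \fint_{B_{2\rho_k}} u$ and using Poincar\'e on $B_{2\rho_k}$ we then control $\fint_{B_{r_*(0)}} |\nabla u|^2$ by a constant times $\fint_{B_{\rho_{k+1}}} |\nabla u|^2$ — but more usefully, we get a handle on the oscillation of $u$ across scales.

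The cleanest route to \eqref{E130} itself is to control the difference of averages $\fint_{B_{\rho_k}} u - \fint_{B_{\rho_{k+1}}} u$. By Poincar\'e on $B_{\rho_{k+1}}$ this difference is bounded (up to a constant) by $\rho_{k+1} \bigl( \fint_{B_{\rho_{k+1}}} |\nabla u|^2 \bigr)^{1/2}$, and by \eqref{E37} applied at scales $r = \rho_{k+1}$ and $R = R_0$ together with Caccioppoli at scale $R_0$, each of these is bounded by $C \bigl( \fint_{B_{R_0}} |u - \fint_{B_{2R_0}} u|^2 \bigr)^{1/2}$ — here one should be slightly careful and instead set things up on $B_{R_0}$ versus a concentric slightly larger ball, or simply note that by the hypothesis we may assume \eqref{E37} holds up to radius $2R_0$ since $r_*(0)$ is defined by an ``all $R \ge r$'' condition. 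Actually it is cleaner to bound $\fint_{B_{\rho_{k+1}}}|\nabla u|^2 \le C \fint_{B_{R_0}}|\nabla u|^2$ and then $\fint_{B_{R_0}}|\nabla u|^2 \le \frac{C}{R_0^2}\fint_{B_{2R_0}}|u - \fint u|^2 \le \frac{C}{R_0^2} \fint_{B_{2R_0}}|u|^2$ after adjusting the constant; but to avoid enlarging $R_0$ one instead works with $B_{R_0/2}$ and $B_{R_0}$. Summing the telescoping series $\fint_{B_{\rho_0}} u - \fint_{B_{\rho_N}} u = \sum_k (\fint_{B_{\rho_k}} u - \fint_{B_{\rho_{k+1}}} u)$, each term is bounded by $C \rho_{k+1} \bigl(\fint_{B_{R_0}}|\nabla u|^2\bigr)^{1/2}$, and since $\rho_{k+1}$ grows geometrically the sum is dominated by its last term, of size $C R_0 \bigl(\fint_{B_{R_0}}|\nabla u|^2\bigr)^{1/2}$. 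Finally $\fint_{B_{r_*(0)}}|u|^2 \le 2|\fint_{B_{r_*(0)}}u|^2 + 2\fint_{B_{r_*(0)}}|u - \fint u|^2$; the first term is controlled via the telescoping estimate plus $|\fint_{B_{R_0}}u|^2 \le \fint_{B_{R_0}}|u|^2$, and the second via Poincar\'e and \eqref{E37}, both giving the right-hand side of \eqref{E130}.

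The main obstacle — really the only subtlety — is the bookkeeping at the outer scale: the Caccioppoli inequality needs room, so we cannot directly use a ball of radius exactly $R_0$ but need a slightly larger one, and we must make sure the hypothesis \eqref{E37} (which is assumed for the given $r_*(0)$, with $R$ ranging over all radii $\ge r$) legitimately applies at the radii we invoke. Since $r_*(0)$ is, by its defining property, a radius for which \eqref{E37} holds for \emph{all} $R \ge r \ge r_*(0)$, and since $R_0 \ge r_*(0)$, we are free to use \eqref{E37} with $R$ slightly larger than $R_0$ at the cost of replacing $B_{R_0}$ by $B_{2R_0}$ on the right-hand side and absorbing a dimensional constant; alternatively one simply states and proves \eqref{E130} with $B_{R_0}$ replaced by $B_{2R_0}$ on the right (which is all that is needed downstream) — but to match the stated lemma one halves the outer radius throughout. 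Everything else is a routine chain of Poincar\'e, Caccioppoli, and the geometric-series summation.
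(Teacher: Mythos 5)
Your proposal is correct and follows essentially the same route as the paper's proof: telescoping the dyadic averages $\fint_{B_{2^k r_*}} u$ via the Poincar\'e-type bound on differences of averages, controlling all gradient terms through \eqref{E37} at scale $R_0/2$ and Caccioppoli from $B_{R_0/2}$ to $B_{R_0}$, and splitting $\fint_{B_{r_*(0)}}|u|^2$ into oscillation plus squared average. The outer-scale bookkeeping you flag (halving the radius so Caccioppoli has room, then comparing the last ball to $B_{R_0}$ by Jensen) is exactly how the paper handles it.
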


\begin{proof}
 Throughout the proof we write $r_*$ instead of $r_*(0)$. We assume that $2r_* < R_0$, since otherwise \eqref{E130} is trivial. For $r \in [r_*,R_0]$ we denote $u_r := \fint_{B_r} u$. We have
 \begin{align*}
  \fint_{B_{r_*}} \a{u - u_{r_*}}^2 &\overset{\textrm{Poincar\'e}}{\lesssim} r_*^2 \fint_{B_{r_*}} \a{ \nabla u }^2 \overset{\eqref{E37}}{\lesssim} r_*^2 \fint_{B_{{R_0/2}}} \a{ \nabla u}^2 
\\ &\uoset{\textrm{Poincar\'e}}{\eqref{E99}}{\lesssim}
\p{ \frac{r_*}{R_0} }^{2}  \fint_{B_{R_0}} \a{u}^2 \le \fint_{B_{R_0}} \a{u}^2.
 \end{align*}
Hence, to prove \eqref{E37} it is enough to show 
\begin{equation}\label{E198}
 \a{ u_{r_*} }^2 = \a{ \fint_{B_{r_*}} u }^2 \lesssim \fint_{B_{R_0}} \a{ u }^2.
\end{equation}
To prove it, we use the following estimate
\begin{equation}\label{E199}
 \a{ u_r - u_{2r} } \lesssim r \p{ \fint_{B_{2r}} \a{ \nabla u }^2 }^\oh,
\end{equation}
which in fact holds for any function $u \in W^{1,2}(B_{2r})$. 

We first argue how to obtain (\ref{E198}) the proof thanks to estimate (\ref{E199}): Let $n \ge 0$ be the largest integer that satisfies $2^n r_* \le R_0/2$; using \eqref{E199} multiple times we get
\begin{align*}
 \a{ u_{r_*} - u_{2^n r_*} } &\uoset{\eqref{E37}}{}{\le} \sum_{k=0}^{n-1} \a{ u_{2^k r_*} - u_{2^{k+1} r_*} } \overset{\eqref{E199}} \lesssim \sum_{k=0}^{n-1} 2^k r_* \p{ \fint_{B_{2^{k+1}r_*}} \a{ \nabla u }^2 }^\oh 
\\
&\overset{\eqref{E37}}\lesssim \p{ \fint_{B_{R_0/2}} \a{ \nabla u }^2 }^\oh \sum_{k=0}^{n-1} 2^k r_* \overset{\eqref{E99}} \lesssim R_0 \p{ \frac{1}{R_0^2} \fint_{B_{R_0}} \a{  u }^2 }^\oh = \p{ \fint_{B_{R_0}} \a{  u }^2 }^\oh.
\end{align*}
Using Jensen's inequality and the fact that $R_0 \le 2^{n+2}r_*$ we get
\begin{equation}\nonumber
 \a{ u_{2^n r_*} } = \a{ \fint_{B_{2^n r_*}} u } \le \p{ \fint_{B_{2^n r_*}} \a{u}^2 }^\oh \lesssim \p{ \fint_{B_{R_0}} \a{  u }^2 }^\oh.
\end{equation}
Combination of the two previous estimates then gives \eqref{E198}. 

It remains to prove \eqref{E199}. Using Jensen's and Poincar\'e's inequalities we get
\begin{align*}
 \a{ u_r - u_{2r} } &= \a{ \fint_{B_r} (u-u_r) - (u-u_{2r}) } \lesssim \fint_{B_r} \a{ u - u_r} + \fint_{B_{2r}} \a{ u - u_{2r}} 
\\ 
&\lesssim \p{ \fint_{B_r} \a{ u - u_r}^2}^\oh + \p{\fint_{B_{2r}} \a{ u - u_{2r}}^2 }^\oh \lesssim r \p{ \fint_{B_{2r}} \a{ \nabla u }^2}^\oh.
\end{align*}
\end{proof}

\subsection{Proof of \eqref{E47}}

We denote $R_0 := |x_0 - y_0|/3$. We split the proof of \eqref{E47} into 4 steps. In the first step we show that 
\begin{equation}\label{E174}
 \int_{B_1(y_0)} \a{ \F\p{\nabla_x \nabla_y G(\cdot,y)}}^2 \dy \lesssim \p{ \frac{r_*(x_0)r_*'(y_0)}{R_0^2}}^{d}
\end{equation}
for any $\rho \in [1,2]$ and any functional $F_\rho$ on $L^2(B_\rho(x_0))$ which satisfies
\begin{equation}\label{E62}
 \a{ \F(\nabla v) }^2 \le \int_{B_\rho(x_0)} \a{ \nabla v }^2,
\end{equation}
for any $v \in W^{1,2}(B_\rho(x_0))$.
In the second step, using Neumann eigenfunctions on a ball, we define a family of functionals $F_k$ satisfying \eqref{E62}, which will play the role of Fourier coefficients. Using these $F_k$ we then estimate $\int \a{v}^2$ with a sum of $N$ terms $\a{F_k(v)}^2$ plus a residuum in the form $\frac{1}{\lambda_N} \int \a{\nabla v}^2$. Here, $\lambda_N$ denotes the $N$-th Neumann eigenvalue of Laplacian on a ball. Using \eqref{E62} together with the second step we get an estimate on $\int \a{ \nabla_x \nabla_y G}^2$ in terms of a (good) term and a small prefactor times $\int \a{ \nabla_x \nabla_y G}^2$, integrated over a slightly larger ball. In the last step, we apply iteratively this estimate.  

\medskip\noindent{\bf Step 1.} Proof of \eqref{E174} (inspired by the duality argument of Avellaneda and Lin \cite{AL1}). 

Let $f \in L^2(B_{R_0}(y_0);\R^d)$, and let $u$ be the finite energy solution of 
\begin{equation}\nonumber
 -\nabla \cdot A \nabla u = -\nabla \cdot f
\end{equation}
in $\R^d$, for which holds the energy estimate
\begin{equation}\label{energy1}
 \int_{\R^d} \a{\nabla u}^2 \lesssim \int_{\R^d} \a{f}^2.
\end{equation}
Then on the one hand, the Green's function representation formula yields
\begin{equation}\label{greenrep}
 \nabla u(x) = \int_{B_{R_0}(y_0)} \nabla_x \nabla_y G(x,y) f(y) \dy.
\end{equation}
If $r_*(x_0) \le R_0$ (w.\ l.\ o.\ g.\ we assume $r_*(x_0) \ge \rho$), we use this in \eqref{E37} to get 
\begin{multline}\nonumber
 \a{ \F(\nabla u) }^2 \le \int_{B_\rho(x_0)} \a{ \nabla u}^2 \dx \le \int_{B_{r_*(x_0)}(x_0)} \a{\nabla u}^2 \lesssim \p{ \frac{r_*(x_0)}{R_0}}^d \int_{B_{R_0}(x_0)} \a{\nabla u}^2 
\\
\overset{\eqref{energy1}}{\lesssim} \p{ \frac{r_*(x_0)}{R_0}}^d \int_{\R^d} \a{f}^2. 
\end{multline}
If $r_*(x_0) \ge R_0$, we simply have
\begin{equation}\nonumber
 \a{ \F(\nabla u) }^2 \le \int_{B_\rho(x_0)} \a{ \nabla u}^2 \dx \overset{\eqref{energy1}}\lesssim \int_{\R^d} \a{f}^2 \dx \le \p{ \frac{r_*(x_0)}{R_0}}^d \int_{\R^d} \a{f}^2. 
\end{equation}

Since $\F$ is linear, using \eqref{greenrep} we have
\begin{equation}\nonumber
 \F(\nabla u) = \int_{B_{R_0}(y_0)} \F\p{ \nabla_x \nabla_y G(\cdot,y)} f(y) \dy,
\end{equation}
where the dot means that $\F$ acts on the first variable. The previous relations then give
\begin{equation}\nonumber
 \a{ \int_{B_{R_0}(y_0)} \F\p{ \nabla_x \nabla_y G(\cdot,y)} f(y) \dy }^2 \lesssim \p{ \frac{r_*(x_0)}{R_0}}^d \int_{B_{R_0}(x_0)} \a{f}^2. 
\end{equation}
Using definition of the norm $L^2(B_{R_0}(y_0))$ by duality we get
\begin{equation}\label{E98}
 \int_{B_{R_0}(y_0)} \a{ \F\p{ \nabla_x \nabla_y G(\cdot,y)} }^2 \dy 
\lesssim \p{ \frac{r_*(x_0)}{R_0}}^{d}.
\end{equation}

Let $r_*'(y_0)$ play the same role as $r_*(x_0)$ but for the adjoint equation. In the case $r_*'(y_0) \le R_0$, since $y \mapsto \F(\nabla_x G(\cdot,y))$ solves the adjoint equation $-\nabla \cdot A^t \nabla\F(\nabla_x G(\cdot,y))=0$ in $B_{R_0}(y_0)$, an analogue of \eqref{E37} implies
\begin{multline}\label{E88}
 \int_{B_1(y_0)} \a{ \F(\nabla_x \nabla_y G(\cdot,y)) }^2 \dy \le \int_{B_{r_*'(y_0)}(y_0)} \a{ \F(\nabla_x \nabla_y G(\cdot,y)) }^2 \dy 
\\
\lesssim \p{ \frac{r_*'(y_0)}{R_0}}^{d} \int_{B_{R_0}(y_0)} \a{ \F(\nabla_x \nabla_y G(\cdot,y)) }^2 \dy \overset{\eqref{E98}}{\lesssim} \p{ \frac{r_*(x_0)r_*'(y_0)}{R_0^2}}^{d}.
\end{multline}
If $r_*'(y_0) \ge R_0$, we get the same conclusion for free:
\begin{align*}
 \int_{B_1(y_0)} \a{ \F(\nabla_x \nabla_y G(\cdot,y)) }^2 \dy 
&\uoset{\eqref{E98}}{}{\le} \int_{B_{R_0}(y_0)} \a{ \F(\nabla_x \nabla_y G(\cdot,y)) }^2 \dy 
\\
&\overset{\eqref{E98}}\lesssim \p{ \frac{r_*(x_0)}{R_0}}^{d} \le \p{ \frac{r_*(x_0)r_*'(y_0)}{R_0^2}}^{d}.
\end{align*}

\medskip\noindent{\bf Step 2.} Let $\rho \in [1,2)$ and $\delta > 0$ be fixed such that $(1+\delta)\rho \le 2$. For $n \ge 0$, let $F_n$ denote the functional on $L^2(B_{(1+\delta)\rho})$, defined as an inner product of the $n$-th eigenfunction of Neumann Laplacian with $v$, and let $\lambda_n$ be the associated eigenvalue. By writing any $v \in W^{1,2}(B_{(1+\delta)\rho})$ in terms of Neumann eigenfunctions (which can be done since they form an orthonormal basis in $L^2$) we get
\begin{equation}\label{E110}
\begin{aligned}
  \int_{B_{(1+\delta)\rho}} \a{ v }^2 &= \sum_{k=0}^\infty \a{ F_k(v) }^2 = 
\sum_{k=0}^{N-1} \a{ F_k(v) }^2 + \sum_{k=N}^\infty \frac{1}{\lambda_k} \a{ F_k(\nabla v) }^2 \\ 
&\le \sum_{k=0}^{N-1} \a{ F_k(\nabla v) }^2 + \frac{1}{\lambda_N} \int_{B_{(1+\delta)\rho}} \a{ \nabla v }^2.
\end{aligned}
\end{equation}

\medskip\noindent{\bf Step 3.} Combination of Step 1 and Step 2 (applied to $\nabla_y G(\cdot,y)$) and use of \eqref{E88} yields
\begin{equation}
\begin{aligned}\label{43}
 &\int_{B_1(y_0)} \int_{B_\rho(x_0)} \a{ \nabla_x \nabla_y G(x,y) }^2 \dx \dy \overset{\eqref{E99}}\lesssim 
 \frac{1}{\delta^2} \int_{B_1(y_0)} \int_{B_{(1+\delta)\rho}(x_0)} \a{ \nabla_y G(x,y) }^2 \dx \dy \le
 \\
 &\overset{\eqref{E110}}\lesssim \frac{1}{\delta^2} \p{ \sum_{k=0}^{N-1} \int_{B_1(y_0)} \a{ F_k(\nabla_x \nabla_y G(\cdot,y))}^2 + \frac{1}{\lambda_N} \int_{B_1(y_0)} \int_{B_{(1+\delta)\rho}(x_0)} \a{ \nabla_x \nabla_y G(x,y) }^2 \dx \dy }
 \\
 &\overset{\eqref{E88}}\lesssim \frac{1}{\delta^2} \p{ N \p{ \frac{r_*(x_0)r_*'(y_0)}{R_0^2}}^{d} + \frac{1}{\lambda_N} \int_{B_1(y_0)} \int_{B_{(1+\delta)\rho}(x_0)} \a{ \nabla_x \nabla_y G(x,y) }^2 \dx \dy }.
\end{aligned}
\end{equation}

\medskip\noindent{\bf Step 4.} For a given sequence $\delta_k > 0$ such that $\rho \Pi_{k=1}^\infty (1+\delta_k) \le 2$ we consider the following iteration procedure. Let $\rho_0 := 1$, and for $k\ge 1$ set $\rho_k := (1+\delta_k)\rho_{k-1}$. We denote
\begin{equation}\nonumber
M_k := \p{ \frac{r_*(x_0)r_*'(y_0)}{R_0^2}}^{-d} \int_{B_1(y_0)} \int_{B_{\rho_k}(x_0)} \a{ \nabla_x \nabla_y G(x,y)) }^2 \dx \dy. 
\end{equation}
For any $N_k \ge 1$, estimate~\eqref{43} in Step 3 yields
\begin{equation}\label{E121}
 M_k \le \frac{C}{\delta_k^2} N_k + \frac{C}{\delta_k^2} \frac{1}{\lambda_N} M_{k+1},
\end{equation}
where the values of $\delta_k$ and $N_k$ are at our disposal. We choose $\delta_k := (2k)^{-2}$ and $N_k := \alpha k^{2d} 2^d$. Since $\Pi_{k=1}^\infty (1+\frac{1}{4k^2}) \sim 1.46 \le 2$, for this choice of $\delta_k$ for all $k\ge 1$ we have $\rho_k \in [1,2]$. Using lower bound on the Neumann eigenvalues for the ball in the form $\lambda_k \ge Ck^{\frac{2}{d}}$ (in the case of a cube one can use trigonometric functions to explicitly write down the formula for eigenfunctions and eigenvalues; for the ball, one uses the monotonicity of the eigenvalues with respect to the domain, which follows from the variational formulation of the eigenvalues), we can find large enough $\alpha$ such that the prefactor in front of $M_{k+1}$ above satisfies
\begin{equation}\nonumber
 \frac{C}{\delta_k^2} \frac{1}{\lambda_N} \le C' k^4 (\alpha k^{2d} 2^d)^{-\frac{2}{d}} = \frac{C'}{\alpha^{\frac{2}{d}}} \frac{1}{4} \le \frac{1}{4}.
\end{equation}
For this choice \eqref{E121} turns into
\begin{equation}\nonumber
 M_k \le C\alpha k^4 k^{2d} 2^d + \frac{1}{4} M_{k+1}.
\end{equation}
Iterating this we get 
\begin{equation}\nonumber
 M_1 \le C\alpha \sum_{k=1}^K 4^{-k} k^4 k^{2d} 2^d + \p{\frac{1}{4}}^{K} M_{K+1}.
\end{equation}
Assuming we have $\sup_k M_k < \infty$, we send $K \to \infty$ to get
\begin{equation}\nonumber
 M_1 \le C\alpha2^d \sum_{k=1}^\infty 4^{-k} k^{4+2d}.
\end{equation}
Since the sum on the right-hand side is summable, we get that $M_1 \lesssim 1$. 

It remains to justify the assumption $\sup_k M_k < \infty$. For any $\Lambda \ge 1$, let $\chi_\Lambda(y)$ be the characteristic function of the set $\left\{ y \in B_1(y_0) : \int_{B_2(x_0)} |\nabla_x \nabla_y G(x,y)|^2 \le \Lambda \right\}$. Using the previous arguments, applied to $\nabla_x\nabla_y G(x,y) \chi_\Lambda(y)$, we get that 
\begin{equation}\nonumber
\p{ \frac{r_*(x_0)r_*'(y_0)}{R_0^2}}^{-d} \int_{B_1(y_0)} \biggl( \int_{B_{1}(x_0)} \a{ \nabla_x \nabla_y G(x,y)) }^2 \dx\biggr) \chi_\Lambda(y) \dy \le C,
\end{equation}
where the right-hand side does not depend on $\Lambda$. Now we send $\Lambda \to \infty$, and get 
\begin{equation}\nonumber
M_1 = \p{ \frac{r_*(x_0)r_*'(y_0)}{R_0^2}}^{-d} \int_{B_1(y_0)} \int_{B_{1}(x_0)} \a{ \nabla_x \nabla_y G(x,y)) }^2 \dx \dy \le C
\end{equation}
by the Monotone Convergence Theorem. This completes the proof of \eqref{E47}.


\subsection{Proof of \eqref{E52}}

We first observe that using Poincar\'e's inequality we can control the difference between $\nabla_y G$ and its averages over $B_1(x_0)$ by the $L^2$-norm of $\nabla_x \nabla_y G$, which we already control by \eqref{E47}. Hence, to obtain \eqref{E52} it is enough to estimate averages $\fint_{B_1(x_0)} \nabla_y G(x,y) \dx$. Such estimate will follow from an analogue of \eqref{E174} applied to one particular functional $F$. Since in this setting we need to  work with $\int \a{u}^2$ and not with previously used $\int \a{\nabla u}^2$, we will need to use Lemma \ref{lm1}.

\medskip\noindent{\bf Step 1.} 
By Poincar\'e inequality in the $x$-variable we have
\begin{align}\label{E288}
 &\int_{B_1(y_0)} \p{ \int_{B_1(x_0)} \a{ \nabla_y G(x,y) - \p{ \fint_{B_1(x_0)} \nabla_y G(x',y) \ud x' }}^2 \dx} \dy
\\  \nonumber
 &\uoset{\eqref{E47}}{}{\lesssim} \int_{B_1(y_0)} \int_{B_1(x_0)} \a{ \nabla_x \nabla_y G(x,y)}^2 \dx \dy 
 \\ \nonumber
 &\overset{\eqref{E47}}\lesssim \p{ \frac{r_*(x_0)r_*'(y_0)}{R_0^2}}^{d}.
\end{align}
By the triangle inequality we have
\begin{align*}
 &\int_{B_1(y_0)} \int_{B_1(x_0)} \a{ \nabla_y G(x,y)}^2 \dx \dy 
\\ 
&\lesssim \int_{B_1(y_0)} \p{ \int_{B_1(x_0)} \a{ \nabla_y G(x,y) - \p{ \fint_{B_1(x_0)} \nabla_y G(x',y) \ud x' }}^2 \dx} \dy
\\
&\quad +\a{B_1} \int_{B_1(y_0)} \p{ \fint_{B_1(x_0)} \nabla_y G(x,y) \dx}^2 \dy,
\end{align*}
and so \eqref{E52} follows from \eqref{E288} provided we show
\begin{align}\label{E312}
 \int_{B_1(y_0)} \p{ \fint_{B_1(x_0)} \nabla_y G(x,y) \dx}^2 \dy \lesssim \frac{\p{r_*(x_0)r_*'(y_0)}^d}{R_0^{2d-2}}.
\end{align}

\medskip\noindent{\bf Step 2.} Proof of \eqref{E312}. Similarly as for \eqref{E47}, consider arbitrary $f \in L^2(B_{R_0}(y_0);\R^d)$ and the finite energy solution $u$ of 
\begin{equation}\nonumber
 - \nabla \cdot A \nabla u = -\nabla \cdot f
\end{equation}
in $\R^d$, which satisfies the energy estimate
\begin{equation}\label{E246}
\int_{\R^d} \a{ \nabla u}^2 \lesssim \int_{\R^d} \a{f}^2. 
\end{equation}
%
Let $F$ be a linear functional on $L^2(B_1(x_0))$ such that $\a{F(v)}^2 \le \int_{B_1(x_0)} \a{v}^2$ for any $v \in L^2(B_1(x_0))$. 
Then, if $r_*(x_0) \leq R_0$
\begin{align*}
 \a{ F(u) }^2 &\uoset{\textrm{Sobolev}}{}{\le} \int_{B_1(x_0)} \a{ u }^2 \le \int_{B_{r_*(x_0)}(x_0)} \a{u}^2 \overset{\textrm{Lemma \ref{lm1}}}\lesssim r_*^d(x_0) \fint_{B_{R_0}(x_0)} \a { u}^2 
 \\
 &\uoset{\textrm{Sobolev}}{\textrm{Jensen}}{\le} r_*^d(x_0) \p{ \fint_{B_{R_0}(x_0)} \a { u}^{\frac{2d}{d-2}} }^{\frac{d-2}{d}} 
 \lesssim \frac{r_*^d(x_0)}{R_0^{d-2}} \p{ \int_{\R^d} \a { u}^{\frac{2d}{d-2}} }^{\frac{d-2}{d}}
 \\
  &\overset{\textrm{Sobolev}}\lesssim \frac{r_*^d(x_0)}{R_0^{d-2}} \int_{\R^d} \a{\nabla u}^2 \overset{\eqref{E246}}\lesssim \frac{r_*^d(x_0)}{R_0^{d-2}} \int_{\R^d} \a{f}^2. 
\end{align*}
If otherwise $r_*(x_0) > R_0$, we do not need anymore to appeal to Lemma \ref{lm1} and may directly bound
\begin{align*}
\a{ F(u) }^2 &\uoset{\textrm{Sobolev}}{}{\le} \int_{B_1(x_0)} \a{ u }^2 \le \int_{B_{R_0}(x_0)} \a{u}^2 \lesssim r_*^d(x_0) \fint_{B_{R_0}(x_0)} \a { u}^2 
\end{align*}
and proceed as in the previous inequality.
As before, we use linearity of $F$ and write
\begin{equation}\nonumber
 \a{ F(u) } = \a{ \int_{B_{R_0}(y_0)} F( \nabla_y G(\cdot,y) ) f(y) \dy }.
\end{equation}
Since $f \in L^2(B_{R_0}(y_0);\R^d)$ was arbitrary, combination of the two previous estimates yields
\begin{equation}\label{E260}
 \int_{B_{R_0}(y_0)} \a{ F(\nabla_y G(\cdot,y)) }^2 \dy \lesssim \frac{r_*^d(x_0)}{R_0^{d-2}}.
\end{equation}
As before, it remains to argue that by going from $\int_{B_{R_0}(y_0)}$ to $\int_{B_1(y_0)}$ we gain a factor $R_0^{-d}$. We define $v(y) := F(G(\cdot,y))$, and observe that $-\nabla A^t \nabla v = 0$ in $B_{R_0}(y_0)$, where $A^t$ denotes the adjoint coefficient field. 
Then by definition of $v$ estimate \eqref{E260} implies
\begin{equation}\label{E268}
\begin{aligned}
 \int_{B_{1}(y_0)} \a{ F(\nabla_y G(\cdot,y)) }^2 \dy &= \int_{B_{1}(y_0)} \a{ \nabla v }^2 \dy \le \int_{B_{r_*'(y_0)}(y_0)} \a{ \nabla v }^2 \dy 
 \\
 &\lesssim \p{\frac{r_*'(y_0)}{R_0}}^d \int_{B_{R_0}(y_0)} \a{ \nabla v }^2 \lesssim \frac{\p{r_*(x_0)r_*'(y_0)}^d}{R_0^{2d-2}}.
 \end{aligned}
\end{equation}
For the choice $F(v) = \fint_{B_1(x_0)} v$ \eqref{E268} is exactly \eqref{E312}. 

\subsection{Proof of \eqref{E54}}

Similarly to the proof of \eqref{E52}, we use Poincar\'e's inequality (Step 1) to show that \eqref{E54} follows from \eqref{E53} provided we control averages of $G$ (Step 2). 

\medskip\noindent{\bf Step 1.} 
By Poincar\'e's inequality in the $x$-variable we have
\begin{align}\nonumber
 &\int_{B_1(y_0)} \p{ \int_{B_1(x_0)} \a{ G(x,y) - \p{ \fint_{B_1(x_0)} G(x',y) \ud x' }}^2 \dx} \dy
\\ \nonumber
 &\uoset{\eqref{E53}}{}{\lesssim} \int_{B_1(y_0)} \int_{B_1(x_0)} \a{ \nabla_x G(x,y)}^2 \dx \dy 
 \\ \nonumber
 &\overset{\eqref{E53}}\lesssim R_0^2 \p{ \frac{r_*'(x_0)r_*(y_0)}{R_0^2}}^{d}.
\end{align}
Then by the triangle inequality we have
\begin{align} \nonumber
 &\int_{B_1(y_0)} \int_{B_1(x_0)} \a{ G(x,y)}^2 \dx \dy 
\\ \nonumber
&\lesssim \int_{B_1(y_0)} \p{ \int_{B_1(x_0)} \a{ G(x,y) - \p{ \fint_{B_1(x_0)} G(x',y) \ud x' }}^2 \dx} \dy 
\\ \nonumber
&\quad + \a{B_1} \int_{B_1(y_0)} \p{ \fint_{B_1(x_0)} G(x,y) \dx}^2 \dy,
\end{align}
and so \eqref{E54} follows provided we show
\begin{align}\label{E402}
 \int_{B_1(y_0)} \p{ \fint_{B_1(x_0)} G(x,y) \dx}^2 \dy \lesssim \frac{\p{r_*(x_0)r_*'(y_0)}^d}{R_0^{2d-4}}.
\end{align}

\medskip\noindent{\bf Step 2.} Proof of \eqref{E402}. Similarly as for \eqref{E52}, consider arbitrary $f \in L^2(B_{R_0}(y_0))$, but this time $u$ being a finite energy solution of 
\begin{equation}\nonumber
 - \nabla \cdot A \nabla u = f
\end{equation}
in $\R^d$. In order to get the energy estimate, we test the equation with $u$ to obtain:
\begin{align*}
\lambda \int_{\R^d} \a{\nabla u}^2 &\uoset{\textrm{Jensen},d\ge 3}{}{\le} \int_{B_{R_0}(y_0)} f u \le R_0^\frac{d}{2}\p{ \int_{B_{R_0}(y_0)} \a{f}^2 }^\oh \p{ \fint_{B_{R_0}(y_0)} \a{u}^2 }^\oh 
\\
&\overset{\textrm{Jensen},d\ge 3}\le 
R_0^\frac{d}{2}\p{ \int_{B_{R_0}(y_0)} \a{f}^2 }^\oh \p{ \fint_{B_{R_0}(y_0)} \a{u}^\frac{2d}{d-2} }^\frac{d-2}{2d}
\\
&\uoset{\textrm{Jensen},d\ge 3}{}{=}
R_0 \p{ \int_{B_{R_0}(y_0)} \a{f}^2 }^\oh \p{ \int_{B_{R_0}(y_0)} \a{u}^\frac{2d}{d-2} }^\frac{d-2}{2d} 
\\
&\uoset{\textrm{Jensen},d\ge 3}{\textrm{Sobolev}}{\lesssim}
 R_0 \p{ \int_{B_{R_0}(y_0)} \a{f}^2 }^\oh \p{ \int_{\R^d} \a{\nabla u}^2 }^\oh,
\end{align*}
and so
\begin{equation}\label{E424}
 \int_{\R^d} \a{ \nabla u }^2 \lesssim R_0^2 \int_{B_{R_0}(y_0)} \a{f}^2. 
\end{equation}
We point out that compared to the proof of \eqref{E47} or \eqref{E52}, we got additional $R_0^2$ due to the right-hand side of the equation being $f$ and not $\nabla \cdot f$. 

Let $F$ be a linear functional on $L^2(B_1(x_0))$ such that $\a{F(v)}^2 \le \int_{B_1(x_0)} \a{v}^2$. 
If $r_*(x_0) \leq R_0$, then
\begin{align*}
 \a{ F(u) }^2 &\uoset{\textrm{Jensen},d\ge 3}{}{\le} \int_{B_1(x_0)} \a{ u }^2 \le \int_{B_{r_*(x_0)}(x_0)} \a{u}^2 \overset{\textrm{Lemma \ref{lm1}}}\lesssim r_*^d(x_0) \fint_{B_{R_0}(x_0)} \a { u}^2 
 \\
 &\overset{\textrm{Jensen},d\ge 3}\le r_*^d(x_0) \p{ \fint_{B_{R_0}(x_0)} \a { u}^{\frac{2d}{d-2}} }^{\frac{d-2}{d}} 
 \lesssim \frac{r_*^d(x_0)}{R_0^{d-2}} \p{ \int_{\R^d} \a { u}^{\frac{2d}{d-2}} }^{\frac{d-2}{d}}
 \\
  &\uoset{\textrm{Jensen},d\ge 3}{\textrm{Sobolev}}{\lesssim} \frac{r_*^d(x_0)}{R_0^{d-2}} \int_{\R^d} \a{\nabla u}^2 \overset{\eqref{E424}}\lesssim \frac{r_*^d(x_0)}{R_0^{d-4}} \int_{\R^d} \a{f}^2. 
\end{align*}
If otherwise $r_*(x_0) > R_0$, then we directly bound
\begin{align*}
 \a{ F(u) }^2 &\uoset{\textrm{Jensen},d\ge 3}{}{\le} \int_{B_1(x_0)} \a{ u }^2 \le \int_{B_{R_0}(x_0)} \a{u}^2 \lesssim r_*^d(x_0) \fint_{B_{R_0}(x_0)} \a { u}^2
\end{align*}
and proceed analogously to the other case.
Using the Green's function representation formula we have $u(x) = \int_{B_{R_0}(y_0)} G(x,y) f(y) \dy$, and thus the linearity of $F$ yields
\begin{equation}\nonumber
 \a{ F(u) } = \a{ \int_{B_{R_0}(y_0)} F( G(\cdot,y) ) f(y) \dy }.
\end{equation}
Since $f \in L^2(B_{R_0}(y_0))$ was arbitrary, we may combine the two previous estimates and conclude
\begin{equation}\label{E445}
 \int_{B_{R_0}(y_0)} \a{ F(G(\cdot,y)) }^2 \dy \lesssim \frac{r_*^d(x_0)}{R_0^{d-4}}.
\end{equation}
As before, it remains to argue that by going from $\int_{B_{R_0}(y_0)}$ to $\int_{B_1(y_0)}$ we gain a factor $R_0^{-d}$. We define $v(y) := F(G(\cdot,y))$, and observe that $-\nabla A^t \nabla v = 0$ in $B_{R_0}(y_0)$. 

Now we use Lemma \ref{lm1} with $v$ to get 
 \begin{equation}\label{E451}
  \int_{B_1(x_0)} \a{ v }^2  \le \int_{B_{r_*'(y_0)}} \a{v}^2 \overset{\textrm{Lemma \ref{lm1}}}\lesssim \p{\frac{r_*'(y_0)}{R_0}}^d \int_{B_{R_0}(y_0)} \a{v}^2 \overset{\eqref{E445}}\lesssim \frac{\p{r_*(x_0)r_*'(y_0)}^d}{R_0^{2d-4}}.
 \end{equation}
For the choice $F(v) = \fint_{B_1(x_0)} v$, relation \eqref{E451} is exactly \eqref{E402}. 

\section{Proof of Corollary \ref{cor1}}\label{sec2d}
We provide a generalization of (\ref{E52})-(\ref{E53}) in the two-dimensional case. When $d=2$, the Green's function for the whole space $\mathbb{R}^2$ does not have to exist; nevertheless, we may give a definition for $\nabla G$ via the Green's function on $\mathbb{R}^3$. To this purpose we introduce the following notation: If $\bar x \in \mathbb{R}^3$, we write $\bar x= (x, x_3) \in \mathbb{R}^2 \times \mathbb{R}$ and denote by $\bar B_r \subset \R^3$ and $B_r \subset \R^2$ the balls of radius~$r$ and centered at the origin. For a given bounded and uniformly elliptic coefficient field $A$ in $\mathbb{R}^2$, recall that its trivial extension $\bar A$ to $\mathbb{R}^3$ was defined in~\eqref{2d1} by
\begin{equation*}
 \bar A(x,x_3) := \begin{pmatrix} A(x) & 0\ \\ 0 & 1\ \end{pmatrix},
\end{equation*}
and the three-dimensional Green's function $\bar{G}=\bar{G}(\bar{A};\bar{x},\bar{y})$ is defined as a solution of 
$$
-\nabla_{\bar x} \cdot \bar{A}\nabla_{\bar x} \bar{G}(\bar A; \cdot , \bar y)=\delta (\cdot -\bar y).
$$
It will become clear below that the argument for the representation formula for $\nabla G$ through $\nabla_x \bar G$ calls for the notion of pointwise existence in $\bar y \in \mathbb{R}^3$ of the Green's function $\bar G (\bar A; \cdot , \bar y)$. As mentioned in Section 1, in the case of systems we may only rely on a definition of the Green's function for almost every singularity point $\bar y$. Therefore, differently from the previous sections, we need to bear in mind this weaker notion of existence of $\bar G$.

\bigskip

\noindent{\bf Step 1.} We argue that for almost every $y \in \mathbb{R}^2$ the function $\nabla G$ (since $G$ does not exist, $\nabla G$ should be understood as a symbol for a function and not as a gradient of some function $G$), defined through 
\begin{equation}\label{2d4}
\nabla{G(A;\cdot ,y)}:=\int_{\mathbb{R}}\nabla_x\bar{G}(\bar{A};(\cdot,x_3),(y,y_3)) \dx_3,
\end{equation}
satisfies for every $\zeta \in C^\infty_0(\mathbb{R}^2)$ 
\begin{align}\label{2d10}
\int \nabla_x \zeta(x) \cdot A(x) \nabla G(A; x, y) \dx = \zeta(y),
\end{align}
i.e., in a weak sense it solves $-\nabla_x \cdot A \nabla G(A; \cdot, y)= \delta(\cdot -y)$.\\

\noindent By definition of $\bar G(\bar A; \cdot, \cdot)$, we have for almost every $\bar y\in \mathbb{R}^3$ and every $\bar \zeta \in C^\infty_0( \mathbb{R}^3)$  
\begin{align}\nonumber
\int \nabla_{\bar x} \bar \zeta(\bar x) \cdot \bar A \nabla_{\bar x}\bar G(\bar A; \bar x, \bar y) \dbx = \bar \zeta(\bar y).
\end{align}
Thus, for any $\bar \rho \in C^\infty_0(\mathbb{R}^3)$ this yields
\begin{align}\nonumber
\int  \bar \rho(\bar y) \int \nabla_{\bar x}\bar \zeta(\bar x) \cdot \bar A \nabla_{\bar x}\bar G(A; \bar x, \bar y) \dbx \dby = \int \bar \rho(y) \bar \zeta(\bar y) \dby.
\end{align}
We now choose a sequence $\{\bar \zeta_n \}_{n \in \mathbb{N}}$ of test functions $\bar \zeta_n = \eta_n \zeta$, with $\zeta= \zeta(x) \in C^\infty_0( \mathbb{R}^2)$ and $\eta_n= \eta_n(x_3)$ smooth cut-off function for $\{|x_3|< n\}$ in $\{|x_3| < n+1 \}$: 
From the previous identity and definition (\ref{2d1}) it follows
\begin{align*}
\int \bar \rho(\bar y) \int &\zeta(x) \eta_n'(x_3) \partial_{x_3}\bar G(\bar A; \bar x, \bar y) \dbx \dby \\
&+ \int \bar \rho(\bar y) \int \eta_n(x_3) \nabla\zeta(x) \cdot A \nabla \bar G(\bar A; \bar x, \bar y) \dbx \dby = \int \bar \rho(y) \zeta(y) \dby.
\end{align*}
We now want to send $n \rightarrow +\infty$ in the previous identity : By our assumptions on $\bar \rho$ and $\bar \zeta_n$, if we show that
\begin{align}\label{2d9}
\int_{\textrm{supp}(\bar\rho)}\int_{\textrm{supp}(\zeta) \times \mathbb{R}} |\nabla_{\bar x} \bar G( \bar A; \bar x, \bar y)| \dbx \dby < +\infty,
\end{align}
then by the Dominated Convergence Theorem we may conclude that 
\begin{align}\nonumber
\int\bar \rho(\bar y) \int \nabla\zeta(x) \cdot A \bigl( \int_\R \nabla \bar G(\bar A; \bar x, \bar y) \dx_3 \bigr) \dx \dby = \int\bar \rho(\bar y) \zeta(y) \dby,
\end{align}
and thus (\ref{2d10}) by the arbitrariness of the test function $\bar \rho$ and the separability of $C^\infty_0(\mathbb{R}^2)$.

\medskip

\noindent To argue inequality (\ref{2d9}) we proceed as follows: We define a finite radius $M$ such that 
\begin{equation}\nonumber
  M \ge \max(r_*(\bar A^t,\bar X),r_*(A,\bar Y)) \quad \textrm{and} \quad \textrm{supp}(\bar \rho) \subset \bar B_{M}(\bar Y), \ \textrm{supp}(\zeta) \subset B_{M/2}(X),
 \end{equation}
and observe that inequality (\ref{2d9}) is implied by
\begin{align}\label{2d11}
 \int_{\bar B_{M}(\bar Y)} \int_{B_{M/2}(X) \times \mathbb{R}} |\nabla_{\bar x} \bar G| \dbx \dby < +\infty.
\end{align}
Since $\bar A$ is translational invariant in $x_3$, the minimal radius $r_*(\bar A^t,\cdot)$ is independent of $x_3$. Then, by the definition of $M$ and Remark~\ref{rmk1} we have 
\begin{align}\label{2d14}
\int_{\bar B_{M}(\bar  Y)}\int_{\bar B_{M}((X ,X_3))}  |\bar\nabla_x \bar G(\bar A; \bar x, \bar y)|^2 \dbx \dby \lesssim \frac{ M^6}{|Y - (X ,X_3)|^4} \le \frac{ M^6}{|Y_3 - X_3|^4}
\end{align}
provided $|X_3 - Y_3| \ge 3M$. 
\medskip

We now cover the cylinder $B_{M/2}(X) \times \mathbb{R}$ with countably many balls of radius $M$ centered at the points $(X, \pm Mn) \in \mathbb{R}^3$. By translational invariance we can w.\ l.\ o.\ g.\ assume that $Y_3=0$. We thus bound the integral in~\eqref{2d11} by
\begin{align}\nonumber
\int_{\bar B_M(\bar Y)} &\int_{B_{M/2}(X) \times \mathbb{R}} |\nabla_{\bar x} \bar G| \dbx \dby \leq \sum_{n=0}^{+\infty}\int_{\bar B_M(\bar Y)} \int_{\bar B_{M}(X,\pm Mn)} |\nabla_{\bar x} \bar G| \dbx \dby 
\\ \label{p102}
&\lesssim \int_{\bar B_M(\bar Y)} \int_{\bar B_{4M}((X,0))} |\nabla_{\bar x} \bar G| \dbx \dby + \sum_{n > 4}\int_{\bar B_M(\bar Y)} \int_{\bar B_{M}(X,\pm Mn)} |\nabla_{\bar x} \bar  G| \dbx \dby.
\end{align}
We claim that $\nabla_{\bar x} \bar G( \bar A; \cdot, \cdot) \in L^1_{loc}( \mathbb{R}^3 \times \mathbb{R}^3)$, and so the first integral on the r.\ h.\ s.\ of the previous identity is finite. 

Here we only sketch the idea why $\bar \nabla_{\bar x} \bar G \in L^1_{loc}(\R^3 \times \R^3)$; for the proof with all the details we refer to the proof of \cite[Theorem 1]{ConlonGiuntiOtto}. To show that $\bar \nabla_{\bar x} \bar G \in L^1_{loc}$ it suffices to show that $\int_{\bar B_R(0)} \int_{\bar B_R(0)} |\bar \nabla_{\bar x}\bar G| < \infty$. In order to do that we observe that for given two distinct points $\bar x, \bar y \in \R^3$, the proof of Theorem~\ref{thm1} (without the use of $r_*$ to go to smaller scales; see also Remark~\ref{rmk1}) implies in $3D$
\begin{equation*}
 \biggl( \int_{\bar B_r(\bar x)} \int_{\bar B_r(\bar y)} |\bar \nabla_{\bar x}\bar G|^2 \biggr)^{\frac{1}{2}} \lesssim \frac{|\bar B_r|}{r^2},
\end{equation*}
where $r = |\bar x-\bar y|/3$, which by H\"older's inequality turns into
\begin{equation*}
 \int_{\bar B_r(\bar x)} \int_{\bar B_r(\bar y)} |\bar \nabla_{\bar x}\bar G| \lesssim \frac{|\bar B_r|^2}{r^2}.
\end{equation*}
Using a simple covering argument, the above estimate holds also in the case when the balls are replaced by cubes. Since  $\bar B_R(0) \times \bar B_R(0)$ can be written as a null-set plus a countable union of pairs of open cubes $\bar Q_{r_n}(\bar x_n) \times \bar Q_{r_n}(\bar y_n)$, each with size $r_n := |\bar x_n - \bar y_n|/3$ and such that each pair of points $(\bar x,\bar y) \in \bar B_R(0) \times \bar B_R(0)$ belongs to at most one such pair of cubes, we conclude
\begin{equation}\nonumber
 \int_{\bar B_R(0)} \int_{\bar B_R(0)} |\bar \nabla_{\bar x}\bar G| \lesssim \int_{\bar B_{2R}(0)} \int_{\bar B_{2R}(0)} |\bar x- \bar y|^{-2} \dbx \dby < \infty,
\end{equation}
where we used that for $(\bar x,\bar y) \in \bar Q_{r_n}(\bar x_n) \times \bar Q_{r_n}(\bar y_n)$ we have $|\bar x - \bar y| \sim r_n$. 

Going back to the second term on the right-hand side of~\eqref{p102}, an application of H\"older's inequality in both variables $\bar x $ and $\bar y$ yields for the the sum over $n$ 
\begin{align*}
\sum_{n > 4}\int_{\bar B_M(\bar Y)} \int_{\bar B_{M}(X,\pm Mn)} |\nabla_{\bar x} \bar G|& \lesssim M^3 \sum_{n > 4} 
\biggl(\int_{\bar B_{M}(\bar Y)} \int_{\bar B_{M}(X,\pm M n)} |\bar \nabla_{\bar x} \bar G|^2 \biggr)^{\frac 1 2}.
\end{align*}
We now may apply to the r.h.s. the bound (\ref{2d14}) and thus obtain
\begin{align*}
\sum_{n > 4}\int_{\bar B_M(\bar Y)} \int_{\bar B_{M}(X,\pm Mn)} |\nabla_{\bar x} \bar G|&  \lesssim M^6 \sum_{n > 4} (Mn)^{-2} 
\lesssim M^4 < \infty.
\end{align*}
We have established (\ref{2d9}).

\medskip

\noindent Before concluding Step 1, we show that the representation formula (\ref{2d4}) does not depend on the choice of the coordinate $y_3 \in \mathbb{R}$, namely that for almost every two values $y_{0,3},y_{1,3} \in \mathbb{R}$, for almost every $y_0, x_0 \in \mathbb{R}^2$
\begin{align}\label{2d14b}
\int_{\mathbb{R}}\nabla_x \bar{G}(\bar{A};(x_0, x_3),(y_0, y_{0,3})) \dx_3&  = \int_{\mathbb{R}} \nabla_x\bar{G}(\bar{A};(x_0, x_3),(y_0, y_{1,3}) ) \dx_3.
\end{align}
Without loss of generality we assume $y_{0,3}=0$: Since by the uniqueness of $\bar G(\bar A; \cdot , \cdot)$, for every $\bar z \in \mathbb{R}^3$ and almost every $\bar x, \bar y \in \mathbb{R}^3$
$$
\bar G(\bar A;\bar x+\bar z , \bar y+\bar z) = \bar G(\bar A( \cdot + \bar z); \bar x, \bar y),
$$
by choosing $\bar z=(0, z_3)$ and using definition (\ref{2d1}) for $\bar A$, we get 
\begin{align}\label{2d13}
\bar G(\bar A;\bar x+\bar z , \bar y+\bar z) = \bar G(\bar A; \bar x, \bar y).
\end{align}
Let $x_0, y_0\in \mathbb{R}^2$ and $y_{1,3} \in \mathbb{R}^3$ be fixed: For every $\delta > 0$ we may write
\begin{align*}
\fint_{B_\delta (x_0)}&\fint_{\bar B_\delta((y_0, y_{1,3}))}\int_{\mathbb{R}}\nabla_x \bar{G}(\bar{A};\bar{x},\bar{y}) \dbx \dby
\\
&=\fint_{B_\delta (x_0)}\fint_{\bar B_\delta((y_0, y_{1,3}))}\int_{\mathbb{R}}\nabla_x \bar{G}(\bar{A};(x,x_3-y_{1,3}+y_{1,3}),(y, y_3-y_{1,3}+y_{1,3})) \dbx\dby,
\end{align*}
and use (\ref{2d13}) with $\bar z= (0, y_{1,3})$ to get
\begin{align*}
\fint_{B_\delta (x_0)}&\fint_{\bar B_\delta((y_0, y_{1,3}))}\int_{\mathbb{R}}\nabla_x \bar{G}(\bar{A};\bar{x},\bar{y}) \dbx\dby=\fint_{B_\delta (x_0)}\fint_{\bar B_\delta((y_0, 0))}\int_{\mathbb{R}}\nabla_x \bar{G}(\bar{A};\bar{x},\bar y) \dbx\dby.
\end{align*}
We now appeal to Lebesgue's theorem and conclude (\ref{2d14}).

\bigskip

\noindent{\bf Step 2.} Proof of (\ref{2d5}). For this part we denote $r_x := r_*(\bar A^t,(x_0,0))$ and $r_y := r_*(\bar A, (y_0,0))$. By translational invariance of $\bar A$ and $\bar A^t$ we have $r_x = r_*(\bar A^t,(x_0,x_3))$ and $r_y = r_*(\bar A,(y_0,y_3))$ for any $x_3, y_3 \in \R$. Denoting $\B := B_1(y_0) \times (-r_y/2,r_y/2)$, the independence of (\ref{2d4}) from $y_3$ yields
\begin{align*}
\int_{\B} \int_{B_1(x_0)}|\int_{\mathbb{R}}\nabla_{\bar{x}} \bar{G}(\bar{x},\bar{y})\dx_3|^2 \dx \dby 
&\uoset{\eqref{2d4}}{}{=} r_y \int_{B_1(y_0)} \int_{B_1(x_0)}|\int_{\mathbb{R}}\nabla_{\bar{x}} \bar{G}(\bar{x},(y, 0 ) )dx_3|^2 \dx \dy\\
&\stackrel{(\ref{2d4})}{=} r_y \int_{B_1(y_0)} \int_{B_1(x_0)} | \nabla G(A; x,y)|^2 \dx \dy.
\end{align*}
Since $\B \subset \bar B_{r_y}((y_0,0))$, the previous identity implies 
\begin{equation} \nonumber 
\begin{aligned}
r_y \int_{B_1(x_0)}&\int_{B_1(y_0)}|\nabla_x G(A; x,y)|^2 \dx \dy = \int_{\B} \int_{B_1(x_0)}|\int_{\mathbb{R}}\nabla_x \bar{G}(\bar A;\bar{x},\bar{y}) \dx_3|^2 \dx \dby\\
&\lesssim  \int_{\bar B_{r_y}((y_0,0))} \int_{B_1(x_0)}|\int_{\mathbb{R}}\nabla_x \bar{G}(\bar A; \bar{x},\bar{y}) \dx_3|^2 \dx \dby\\
&\le \int_{\bar B_{r_y}((y_0,0))} \int_{B_1(x_0)} \biggl( \sum_{n=-\infty}^{\infty} \int_{nr_x}^{(n+1)r_x} |\nabla_x \bar{G}(\bar A; \bar{x},\bar{y})| \dx_3 \biggr)^2 \dx \dby.
\end{aligned}
\end{equation}
We define a sequence 
\begin{equation*}
a_n := \frac{(r_x r_y)^{\frac{3}{4}}}{( |x_0 - y_0|^2 + n^2 (r_x)^2)^{\frac{1}{2}}}
\end{equation*}
and observe that
\begin{align*}
\biggl( \sum_{n=-\infty}^\infty & \int_{nr_x}^{(n+1)r_x} |\nabla_x \bar G(\bar A;\bar x,\bar y)| \dx_3 \biggr)^2
= \biggl( \sum_{n=-\infty}^\infty a_n \frac{r_x}{a_n} \fint_{nr_x}^{(n+1)r_x} |\nabla_x \bar G(\bar A;\bar x,\bar y)|\dx_3 \biggr)^2
\\
&\overset{\textrm{H\"older}}{\le} \biggl( \sum_{n=-\infty}^\infty a_n^2 \biggr) \biggl( \sum_{n=-\infty}^\infty \frac{(r_x)^2}{a_n^2} \biggl( \fint_{nr_x}^{(n+1)r_x} |\nabla_x \bar G(\bar A;\bar x,\bar y)|\dx_3  \biggr)^2 \biggr)
\\
&\overset{\textrm{Jensen}}{\le} \biggl( \sum_{n=-\infty}^\infty a_n^2 \biggr) \biggl( \sum_{n=-\infty}^\infty \frac{r_x}{a_n^2} \int_{nr_x}^{(n+1)r_x} |\nabla_x \bar G(\bar A;\bar x,\bar y)|^2\dx_3  \biggr).
\end{align*}
Since
\begin{equation}\label{p101}
 \sum_{n=-\infty}^\infty a_n^2 \lesssim \frac{(r_x r_y)^{\frac{3}{2}}}{|x_0-y_0| r_x},
\end{equation}
where for simplicity we assumed $|x_0-y_0| \ge r_x$, we combine the three above relations to infer
\begin{align*}
r_y \int_{B_1(x_0)} &\int_{B_1(y_0)}|\nabla_x G(A; x,y)|^2 \dx \dy 
\\
&\uoset{\eqref{E53},d=3}{}{\lesssim} \frac{(r_x r_y)^{\frac{3}{2}}}{|x_0-y_0| r_x} \sum_{n} \frac{r_x}{a_n^2} 
\\
&\qquad \qquad \quad \times \int_{\bar B_{r_y}((y_0,0))} \int_{\bar B_{r_x}{(x_0,(n+1/2)r_x)}} |\nabla_x \bar G(\bar A;\bar x,\bar y)|^2 \dbx \dby 
\\
&\overset{\eqref{E53},d=3}{\lesssim} \frac{(r_x r_y)^{\frac{3}{2}}}{|x_0-y_0| r_x)} \sum_{n} \frac{r_x}{a_n^2} a_n^4 \overset{\eqref{p101}}{\lesssim} \frac{(r_x r_y)^3}{|x_0-y_0|^2 r_x},
\end{align*}
which is exactly \eqref{2d5}. 


\medskip

\noindent 

Concerning (\ref{2d5b}), there are two possible ways how to proceed. For the first we observe that \eqref{2d10} implies for every test function $\phi \in C^\infty_c(\R^2)$
\begin{equation}\nonumber
 \int \nabla \phi(x) \cdot A(x) \biggl( \int \nabla_y \nabla G(x,y) \cdot f(y) \dy \biggr) \dx = \int \nabla \phi \cdot f = \int \nabla\phi \cdot A\nabla u,
\end{equation}
where $f \in L^2(\R^2;\R^2)$ and $u$ is a solution of $-\nabla \cdot A \nabla u = -\nabla \cdot f$. Therefore we have that
\begin{equation}
 \nabla u(x) = \int \nabla_y \nabla G(x,y) \cdot f(y) \dy,
\end{equation}
and the proof of~\eqref{E47} applies verbatim. A different way would be to mimic the argument for~\eqref{2d5}, i.e., to define $\nabla_y \nabla G$ as an integral of the second mixed derivative of the Green's function in three dimension. Unfortunatelly, this way we would obtain the estimate where the minimal radii in $2D$ appearing on the right-hand side of~\eqref{2d5b} would need to be replaced (with possibly larger) minimal radii for $3D$. 


\section*{Acknowledgment}

We warmly thank Felix Otto for introducing us into the world of stochastic homogenization and also for valuable discussions of this particular problem. 

\providecommand{\bysame}{\leavevmode\hbox to3em{\hrulefill}\thinspace}
\providecommand{\MR}{\relax\ifhmode\unskip\space\fi MR }
\providecommand{\MRhref}[2]{%
  \href{http://www.ams.org/mathscinet-getitem?mr=#1}{#2}
}
\providecommand{\href}[2]{#2}


\end{document}